\documentclass{amsart}
\usepackage{tikz}
\usepackage{xcolor}
\usepackage{amssymb,latexsym,amsmath,extarrows}
\usepackage{graphicx,mathrsfs,comment}
\usepackage{hyperref,url}

\usepackage{amstext}
\usepackage{bbm}

\numberwithin{equation}{section}

\newtheorem{theorem}{Theorem}[section]
\newtheorem{lemma}[theorem]{Lemma}

\theoremstyle{remark}

\newcommand{\dint}{\displaystyle\int}

\newcommand{\R}{\mathbb{R}}

\newcommand{\1}{\mathbbm{1}}

\begin{document}

\title[]{On the density of rational lines on diagonal cubic hypersurfaces}

\author{Kiseok Yeon} \address{ Kiseok Yeon\\  Department of Mathematics, University of California, Davis, United States}\email{kyeon@ucdavis.edu}

\begin{abstract}

In this paper, we establish the asymptotic estimates for the rational lines on diagonal cubic hypersurfaces defined by $\sum_{i=1}^sc_ix^3_i=0$ with $c_i\in\mathbb{Z}\setminus \{0\},$ provided that $s\geq 19.$ This improves the previously known bound $s\geq 21$ required to obtain such asymptotic estimates. Our approach develops a multidimensional shifting variables argument together with a pruning argument, and exploits the recent progress on the Parsell-Vinogradov system.

\end{abstract}

\maketitle


\section{Introduction}
The problem of finding linear spaces on algebraic variety was initiated by Brauer \cite{MR13127} and Birch \cite{MR97359} in the middle of the last century as a key ingredient in their inductive approaches to establishing the existence of rational points. Quantitative developments of this problem have since been made by Brandes \cite{MR3198749},
Parsell \cite{MR1778504,MR1817503,MR2485413,MR2965969}, Wooley \cite{MR4744752} and Zhao \cite{MR3552298}; see also Parsell, Prendiville and Wooley \cite{MR3132907}. In \cite{MR1778504,MR1817503,MR2485413,MR2965969,MR3132907,MR4744752,MR3552298}, the authors investigate rational lines on diagonal hypersurfaces, while Brandes \cite[Theorem 2]{MR3198749} studies rational lines on algebraic variety defined by systems of homogeneous polynomial equations of the same degrees.

In this paper, we focus on rational lines on diagonal cubic hypersurfaces.  Consider 
\begin{equation}\label{cubic equation}
    \sum_{i=1}^sc_iz^3_i=0,
\end{equation}
where $c_i\ (1\leq i\leq s)$ are non-zero integers. 
We aim to count pairs of vectors $\boldsymbol{x}=(x_1,\ldots,x_s)\in \mathbb{Z}^s$ and $\boldsymbol{y}=(y_1,\ldots,y_s)\in \mathbb{Z}^s$ such that the line $l: \boldsymbol{x}+t\boldsymbol{y}$ is contained in the hypersurfaces defined by $(\ref{cubic equation}).$
One checks that such pairs $\boldsymbol{x}=(x_1,\ldots,x_s)\in \mathbb{Z}^s$ and $\boldsymbol{y}=(y_1,\ldots,y_s)\in \mathbb{Z}^s$ are counted by the system of equations
\begin{equation}\label{cubic system}
     \sum_{i=1}^sc_ix^3_i= \sum_{i=1}^sc_ix_i^2y_i= \sum_{i=1}^sc_ix_iy_i^2= \sum_{i=1}^sc_iy^3_i=0.
\end{equation}
Let $N_s(X):=N_s(X;\boldsymbol{c})$ denote the number of solutions $(\boldsymbol{x},\boldsymbol{y})\in \mathbb{Z}^{2s}\cap [-X,X]^{2s}$ to $(\ref{cubic system})$. Our main result is the following.

\begin{theorem}\label{theorem 1.1}
    Whenever $s\geq 19$, one has
    \begin{equation}\label{expected asymptotic formula}
        N_s(X)=\sigma X^{2s-12}+O(X^{2s-12-\delta}),
    \end{equation}
    for some $\delta>0$, where $\sigma:=\sigma_{\boldsymbol{c}}$ is a positive constant depending on the coeffcients $\boldsymbol{c}$.
\end{theorem}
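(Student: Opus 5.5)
We prove Theorem \ref{theorem 1.1} by the Hardy--Littlewood circle method in four dimensions. For $\boldsymbol{\alpha}=(\alpha_1,\alpha_2,\alpha_3,\alpha_4)\in[0,1)^4$ we put
\[
f_i(\boldsymbol{\alpha})=\sum_{|x|,|y|\le X}e\bigl(c_i(\alpha_1x^3+\alpha_2x^2y+\alpha_3xy^2+\alpha_4y^3)\bigr),
\]
so that $N_s(X)=\int_{[0,1)^4}\prod_{i=1}^s f_i(\boldsymbol{\alpha})\,d\boldsymbol{\alpha}$. The expected exponent $2s-12$ comes from $2s$ variables minus total degree $4\cdot 3=12$.

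We dissect $[0,1)^4$ into major arcs $\mathfrak{M}$ and minor arcs $\mathfrak{m}$. The major arcs consist of $\boldsymbol{\alpha}$ with $|q\alpha_j-a_j|\le X^{-3}P$ for some $q\le P$, where $P=X^{\eta}$ is small. On $\mathfrak{M}$ the standard approximation $f_i(\boldsymbol{\alpha})\approx q^{-2}S_i(q,\boldsymbol{a})\,v_i(\boldsymbol{\beta})$ holds, where $S_i$ is a complete two-variable exponential sum. Its bound $S_i(q,\boldsymbol a)\ll q^{2-2/3+\epsilon}$, together with $s\ge 19$, makes the singular series absolutely convergent. Likewise $v_i(\boldsymbol\beta)\ll X^2(1+X^3|\boldsymbol\beta|)^{-2/3}$ makes the singular integral converge. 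Positivity of $\sigma=\mathfrak{S}\mathfrak{J}$ follows from nonsingular real and $p$-adic solutions of \eqref{cubic system}, which exist for $s\ge 19$. To obtain these we use the diagonal structure: the system says the binary form $\sum c_i(x_i+ty_i)^3$ vanishes identically, and local solvability follows from known local results for lines on diagonal cubics, via Hensel lifting.

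The minor arcs are the heart of the matter. We use H\"older's inequality to split $\prod f_i$ into a mean-value part and a pointwise part. The mean-value input is the recent sharp (or nearly sharp) bound for the Parsell--Vinogradov system of degree $3$ in two variables. That system comprises all $9$ monomials $x^jy^k$ with $1\le j+k\le 3$, and the bound reads
\[
\int|F(\boldsymbol\gamma)|^{2t}\,d\boldsymbol\gamma\ll X^{4t-\kappa+\epsilon},\qquad \kappa=1\cdot2+2\cdot3+3\cdot4=20,
\]
for $2t$ beyond the critical threshold. The difficulty is that our sum contains only the $4$ cubic monomials. The lower-degree monomials are recovered by a \emph{multidimensional shifting argument}: replace $(x,y)$ by $(x+h_1,y+h_2)$ with $h_1,h_2\le H$, average over the shifts, and use the translation-invariance of $N_s$ to introduce auxiliary frequencies. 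This transfers a mean value of the $4$-dimensional sum to one over the full $9$-dimensional Parsell--Vinogradov integral, losing only a power of $H$. Pruning is then done in two stages. On intermediate arcs (for $q$ up to $X^{\theta}$) we use the major-arc-type pointwise bound for $f_i$ together with a mean value of lower order, as in Wooley's pruning lemmas. On genuinely minor arcs we use a Weyl-type bound $\sup_{\mathfrak m}|f_i|\ll X^{2-\tau+\epsilon}$, obtained by differencing. The required savings are counted by balancing $2s$ against the $12+$ exponents; the choice $s=19$ should be precisely where the shifted Parsell--Vinogradov bound (roughly using $2t=18$ variables) plus one factor handled pointwise beats $X^{2s-12}$ by a power.

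The main obstacle is the minor-arc estimate at $s=19$, specifically making the shifting argument lose little enough. The shift parameter $H$ must be large enough to produce the full Parsell--Vinogradov saving, yet small enough that the resulting arcs remain controllable by pruning. I would first optimize $H=X^{\lambda}$ against the Weyl exponent $\tau$ and the pruning range, and check whether the numerology closes at $s=19$. If it does not, the fallback is to refine the Weyl bound on the arcs where $\alpha_2,\alpha_3$ are poorly approximable.
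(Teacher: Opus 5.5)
\textbf{Major arcs.} Your outline is standard and adequate, though two exponents are too optimistic. For $\boldsymbol a=(1,0,0,0)$ and $q=p^3$ one gets $S(q,\boldsymbol a)=q^{5/3}$, and $u(\gamma_1,0,0,0)\asymp|\gamma_1|^{-1/3}$. The correct exponents are therefore $2-\tfrac13$ and $-\tfrac13$, and these still give absolute convergence for $s\ge 16$, which is what the paper quotes from Zhao.

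\textbf{The gap: the minor-arc count does not close at $s=19$.} This is not just unchecked numerology: the scheme of a shifted Parsell--Vinogradov mean value times a pointwise Weyl bound cannot reach $s=19$. The available bound $J_t(X)\ll X^{2t+\epsilon}+X^{4t-20+\epsilon}$ is diagonal-dominated for $t<10$. So at $t=9$ you only have $J_9(X)\ll X^{18+\epsilon}$, and the shifting argument gives $\int|F|^{18}\ll X^{8}J_9(2X)\ll X^{26+\epsilon}$. One extra factor $\sup_{\mathfrak m}|F|\ll X^{2-\tau}$ then gives $X^{28-\tau}$ against the target $X^{2\cdot 19-12}=X^{26}$, a deficit no Weyl exponent can make up. At $t=10$ the shift gives the essentially sharp $\int|F|^{20}\ll X^{8}J_{10}(2X)\ll X^{28+\epsilon}$. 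However, a pointwise bound only lets you raise the moment, giving $\int_{\mathfrak m}|F|^{21}\ll X^{30-\tau+\epsilon}$, i.e.\ $s\ge 21$, which is Zhao's result. To go below $21$ you need a power saving in the minor-arc mean value itself, and your fallback of refining $\tau$ does not supply one.

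\textbf{What the paper does instead.} The missing idea is that the shifting argument itself yields this saving.
\begin{itemize}
\item In Lemma 2.2 the shifts run over the full range $1\le z_1,z_2\le X$ and cost nothing; they are not restricted to $h\le H$ at a loss.
\item After the transfer to $J_s(2X)$, what remains is the sum $T(\boldsymbol\alpha,\boldsymbol\beta)$ over the shifts and the quadratic parameters $h_i$, which factors into geometric sums.
\item If a single coordinate, say $\alpha_3$, lies in $\mathfrak m(H)$, then $\sum_{z_1\le X}\min(X^2,\|z_1\alpha_3+3z_2\alpha_4+\beta_3\|^{-1})\ll X^{3+\epsilon}(H^{-1}+X^{-1})$. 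Hence $\int_{\mathcal M_l(H)}|F|^{20}\ll X^{28+\epsilon}(H^{-1}+X^{-1})$.
\item To descend from the $20$th to the $19$th moment, the paper uses no pointwise bound. It applies H\"older (exponents $11/12$ and $1/12$) against the $8$th moment over $\mathfrak M(H)^4$. That moment is $\ll X^{4+\epsilon}H^4$, by Hua's lemma in the $\alpha_1,\alpha_4$ directions and the measure $\ll H^4X^{-6}$ of $(\alpha_2,\alpha_3)\in\mathfrak M(H)^2$.
\item Write $\mathfrak M(H)^4\setminus\mathfrak M(H/2)^4$ as the union of four sets $\mathfrak P_l(H)$, on each of which one coordinate lies in $\mathfrak m(H/2)$. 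Each shell with $X^{\delta/100}\le H\le X^{3/2}$ then contributes $X^{26+\epsilon}H^{-7/12}+X^{301/12+\epsilon}H^{1/3}=O(X^{26-\delta'})$.
\end{itemize}
Summing over $O(\log X)$ such shells covers $\mathfrak n_\delta$, and no Weyl bound is used at all.
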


Parsell \cite{MR2965969} previously established (\ref{expected asymptotic formula}) under the stronger condition $s\geq 29$. Earlier, Parsell \cite{MR1778504} deduced that $N_s(X)\gg P^{2s-12}$ for $s\geq 55$. Later, Zhao 
\cite{MR3552298} sharpened this to obtain $(\ref{expected asymptotic formula})$ for $s\geq 21,$ which has been the best available bound. Thus, Theorem \ref{theorem 1.1} represents an improvement over the bound $s\geq 21$ in \cite{MR3552298}.

In this paper, we use Vinogradov's notation, where $f\ll g$ means that $|f(x)|\leq Cg(x)$ for some sufficiently large constant $C>0.$ We may use $f=O(g)$ with the same meaning.

\bigskip

\section{Preliminary}
In this section, we will introduce some definitions and two lemmas from which Theorem \ref{theorem 1.1} immediately follows. Define
\begin{equation}\label{F_{cj}}
    F_{c_j}(\boldsymbol{\alpha}):=\sum_{|x|,|y|\leq X}e(c_j\alpha_1x^3+c_j\alpha_2x^2y+c_j\alpha_3xy^2+c_j\alpha_4y^3),
\end{equation}
where $\boldsymbol{\alpha}=(\alpha_1,\alpha_2,\alpha_3,\alpha_4)\in \mathbb{R}^4$. We see by the orthogonality that
\begin{equation*}
    N_s(X)=\int_{[0,1]^4} \prod_{j=1}^s F_{c_j}(\boldsymbol{\alpha})d\boldsymbol{\alpha}.
\end{equation*}
As in the previous results introduced in the first paragraph, we make use of the Hardy-Littlewood circle method in order to obtain $(\ref{expected asymptotic formula}).$ To do so, we must define
 the major and minor arcs. Define the major arcs $\mathfrak{N}_{\delta}$ by
\begin{equation*}
    \mathfrak{N}_{\delta}=\bigcup_{q\leq X^{\delta}}\bigcup_{\substack{1\leq \boldsymbol{a}\leq q\\ (q,\boldsymbol{a})=1}}\mathfrak{N}_{q,\boldsymbol{a}},
\end{equation*}
where
\begin{equation*}
    \mathfrak{N}_{q,\boldsymbol{a}}:=\{(\alpha_1,\alpha_2,\alpha_3,\alpha_4)\in [0,1)^4:\ |\alpha_i-a_i/q|\leq X^{\delta-3}\}.
\end{equation*}
Define $\mathfrak{n}_{\delta}:=[0,1)^4\setminus \mathfrak{N}_{\delta}.$ Here and throughout, we set $\delta=10^{-10}.$ By using these major and minor arcs dissections, we deduce that
\begin{equation}\label{major and minor arcs dissection}
    N_s(X)=\int_{\mathfrak{N}_{\delta}} \prod_{j=1}^s F_{c_j}(\boldsymbol{\alpha})d\boldsymbol{\alpha}+\int_{\mathfrak{n}_{\delta}} \prod_{j=1}^s F_{c_j}(\boldsymbol{\alpha})d\boldsymbol{\alpha}.
\end{equation}

The first lemma provides the asymptotic estimate for the first term on the right hand side in $(\ref{major and minor arcs dissection})$, which is already in the literature. To facilitate the statement of the lemma, we define
\begin{equation*}
S(q,\boldsymbol{a})=\sum_{x=1}^q\sum_{y=1}^qe\left(\frac{a_1x^3+a_2x^2y+a_3xy^2+a_4y^3}{q}\right)
\end{equation*}
and 
\begin{equation*}
    S(q)=\sum_{\substack{1\leq \boldsymbol{a}\leq q\\(q,\boldsymbol{a})=1}}\prod_{j=1}^s(q^{-2}S(q,c_j\boldsymbol{a})).
\end{equation*}
We define the singular series $\mathfrak{S}$ by
\begin{equation*}
    \mathfrak{S}=\sum_{q=1}^{\infty}S(q).
\end{equation*}
Furthermore, for $(\gamma_1,\gamma_2,\gamma_3,\gamma_4)\in \R^4,$ define
\begin{equation*}
u(\boldsymbol{\gamma})=\int_{-1}^1\int_{-1}^1e(\xi^3\gamma_1+\xi^2\eta\gamma_2+\xi\eta^2\gamma_3+\eta^3\gamma_4)d\xi d\eta.
\end{equation*}
We define the singular integral $\mathfrak{J}$ by
\begin{equation*}
\mathfrak{J}=\int_{\R^4}\prod_{j=1}^su(c_j\boldsymbol{\gamma})d\boldsymbol{\gamma}.
\end{equation*}

\begin{lemma}\label{lemma1.2}
Whenever $s\geq 16,$ one has
\begin{equation}\label{asymptotics for mean value over major arcs}
    \int_{\mathfrak{N}_{\delta}} \prod_{j=1}^s F_{c_j}(\boldsymbol{\alpha})d\boldsymbol{\alpha}=\mathfrak{S}\mathfrak{J}X^{2s-12}+O(X^{2s-12-\eta'}),
\end{equation}
for some $\eta'>0.$ Furthermore, we have  $\mathfrak{S}\asymp1$ and $\mathfrak{J}\asymp 1$.
\end{lemma}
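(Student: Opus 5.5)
We treat the major arcs in the standard way. On each box $\mathfrak{N}_{q,\boldsymbol{a}}$ we write $\boldsymbol{\alpha}=\boldsymbol{a}/q+\boldsymbol{\beta}$ with $|\beta_i|\le X^{\delta-3}$. Splitting $x,y$ into residue classes modulo $q$ and using Euler--Maclaurin (or partial summation in two variables) gives the approximation
\[
F_{c_j}(\boldsymbol{\alpha})=q^{-2}S(q,c_j\boldsymbol{a})\,X^2u(c_jX^3\boldsymbol{\beta})+O(qX(1+X^3|\boldsymbol{\beta}|)).
\]
On $\mathfrak{N}_\delta$ the error term is $O(X^{1+2\delta})$, which beats the trivial bound $X^2$ by a power.

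Next we replace $\prod_{j=1}^s F_{c_j}$ by the product of the approximants. The resulting error is at most $s\,X^{2(s-1)}X^{1+2\delta}$ times the measure of $\mathfrak{N}_\delta$, and that measure is $\ll X^{5\delta}X^{4(\delta-3)}$. Since $\delta=10^{-10}$, the total error is $\ll X^{2s-13+11\delta}$, which is acceptable. Extracting $X^{2s}$ and substituting $\boldsymbol{\gamma}=X^3\boldsymbol{\beta}$, which contributes a Jacobian $X^{-12}$, the main term becomes
\[
X^{2s-12}\sum_{q\le X^\delta}S(q)\int_{|\boldsymbol{\gamma}|\le X^{\delta}}\prod_j u(c_j\boldsymbol{\gamma})\,d\boldsymbol{\gamma}.
\]

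To complete the series and the integral we need decay estimates. For the exponential sums we have $q^{-2}S(q,\boldsymbol{a})\ll q^{-1/3+\epsilon}$ when $(q,\boldsymbol{a})=1$. This follows from known bounds for complete sums of binary cubic forms, via Hua-type estimates or a two-dimensional Weyl/van der Corput argument. Similarly $u(\boldsymbol{\gamma})\ll (1+|\boldsymbol{\gamma}|)^{-1/3}$, by a second-derivative/van der Corput estimate for binary forms. With these, $\sum_{q>Q}|S(q)|\ll\sum_{q>Q}q^{4-s/3+\epsilon}$, which converges with a power saving once $s/3>5$, i.e.\ $s\ge16$. Likewise $\int_{|\boldsymbol{\gamma}|>Q}\prod|u|\ll Q^{4-s/3}$. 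This settles the asymptotic formula with $\eta'$ a small multiple of $\delta$.

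The \emph{main obstacle} is establishing these decay exponents uniformly in $\boldsymbol{a}$ and $\boldsymbol{\gamma}$. The binary form $a_1x^3+a_2x^2y+a_3xy^2+a_4y^3$ may degenerate, for instance into a perfect cube of a linear form, in which case the saving is only $q^{-1/3}$ in total rather than per variable. So the argument must use an exponent that holds for all forms. I would first try $S(q,\boldsymbol{a})\ll q^{2-1/3+\epsilon}$. To prove it, I would reduce via multiplicativity to prime powers, apply a linear change of variables to put the form in a normal shape modulo $p$, and bound the resulting sum by the one-variable cubic Hua bound. If $1/3$ turns out insufficient for $s=16$, I would instead average over $\boldsymbol{a}$ using mean values. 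Concretely, I would bound $\sum_{\boldsymbol{a}}|q^{-2}S(q,\boldsymbol{a})|^{t}$ via the number of solutions of the congruence analogue of the system \eqref{cubic system}, and combine this with H\"older to get convergence of $\sum_q|S(q)|$.

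For the positivity statements, $\mathfrak{J}\asymp1$ follows by Fourier inversion. This identifies $\mathfrak{J}$ with the singular density of real solutions to \eqref{cubic system} in $[-1,1]^{2s}$. That density is positive because there is a nonsingular real solution: one takes pairs $c_ix_i^3$ cancelling with $x_i$ and $y_i$ proportional, and uses $s$ large to ensure the Jacobian has rank $4$. For $\mathfrak{S}\asymp1$, we write $\mathfrak{S}=\prod_p\chi_p$, where each $\chi_p$ is a $p$-adic density. Absolute convergence gives $\prod_{p>p_0}\chi_p\asymp1$. Finally, each $\chi_p>0$ by Hensel lifting from a nonsingular $p$-adic solution, which exists for $s\ge16$ by combining diagonal cubic solubility in many variables with the same proportional construction.
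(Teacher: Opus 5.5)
Your proposal is correct. The paper does not carry this analysis out itself. It cites \cite[section 7]{MR3552298} for the asymptotic formula and for $\mathfrak{S},\mathfrak{J}\ll 1$ when $s\ge 16$. It obtains $\mathfrak{S},\mathfrak{J}\gg 1$ from the standard argument of \cite[section 6]{MR2485413}, given nonsingular local solutions, which exist for $s\ge 14$ by \cite[Lemma 5.1]{MR1778504}. Your sketch reconstructs the standard major-arc argument that these references supply.

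What your version buys is an explanation of the threshold. The uniform bound $q^{-2}S(q,\boldsymbol{a})\ll q^{-1/3+\epsilon}$ is sharp: for the form $ax^3$ with $p\nmid 3a$ and $q=p^3$ one has $|S(q,\boldsymbol{a})|=q^{5/3}$. It yields $|S(q)|\ll q^{4-s/3+\epsilon}$, which is summable exactly when $s\ge 16$. So your fallback of averaging over $\boldsymbol{a}$ is never needed.

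Two details need tightening.

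\emph{The complete-sum bound at $p=2$.} You change variables so that $a_1$ is a unit mod $p$, then apply Hua's bound in $x$ for each fixed $y$. The required change of variables exists for $p\ge 3$: a nonzero binary cubic over $\mathbb{F}_p$ has at most three projective zeros, while $\mathbb{P}^1(\mathbb{F}_p)$ has $p+1\ge 4$ points. It does not exist for $p=2$, since $xy(x+y)$ vanishes on all of $\mathbb{F}_2^2$. In that case split by the parity of $(x,y)$. Whenever $x$ or $y$ is odd, some coefficient of the one-variable polynomial obtained by fixing the other variable is odd, so Hua's bound applies. The both-even part equals $16\,S(2^{k-3},\boldsymbol{a})$ because $f(2x,2y)=8f(x,y)$, and an induction on $k$ closes the argument.

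\emph{The $p$-adic nonsingular solution.} The pairing trick you use over $\mathbb{R}$ fails $p$-adically, because $-c_j/c_i$ need not be a cube in $\mathbb{Q}_p$. Instead take two disjoint blocks of seven variables. Each block carries a nontrivial $p$-adic zero of its diagonal cubic subform, since diagonal cubics in seven variables always have one. On block $B$ set $y_i=\lambda_B x_i$, with $\lambda_1\neq\lambda_2$ (say $0$ and $1$). The Jacobian rows from block $B$ are multiples of $(3,2\lambda_B,\lambda_B^2,0)$ and $(0,1,2\lambda_B,3\lambda_B^2)$, so two distinct values of $\lambda$ give rank $4$. This uses $14$ variables, which matches the threshold the paper quotes from Parsell.
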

\begin{proof}
 By \cite[section 7]{MR3552298}, one infers that whenever $s\geq 16,$ we have $(\ref{asymptotics for mean value over major arcs})$ and $\mathfrak{S},\mathfrak{J}\ll 1.$ Therefore, it suffices to show that $\mathfrak{S}\gg 1$ and $\mathfrak{J}\gg 1$, whenever $s\geq 16$. The claim  $\mathfrak{S}\gg1$ and $\mathfrak{J}\gg1$ can be verified by the standard argument \cite[section 6]{MR2485413}, under the hypothesis of non-singular local solutions, provided that $s\geq 16$. Meanwhile, the argument in \cite[Lemma 5.1]{MR1778504} yields the existence of non-singular solutions provided that $s\geq 14.$ Hence, we conclude that one has $\mathfrak{S}\gg 1$ and $\mathfrak{J}\gg 1$, whenever $s\geq 16.$
\end{proof}

\begin{lemma}\label{lemma1.3}
    Whenever $s\geq 19$, one has
\begin{equation*}
    \int_{\mathfrak{n}_{\delta}} \prod_{j=1}^s F_{c_j}(\boldsymbol{\alpha})d\boldsymbol{\alpha}\ll X^{2s-12-\eta''},
\end{equation*}    
for some $\eta''>0.$
\end{lemma}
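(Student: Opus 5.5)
We plan to bound the minor arc contribution by combining a mean value estimate with a pointwise (Weyl-type) bound. We split the product with Hölder's inequality, so that it suffices to bound, for each fixed $j$, the quantity $\int_{\mathfrak{n}_\delta}|F_{c_j}(\boldsymbol{\alpha})|^{s}d\boldsymbol{\alpha}$ by $X^{2s-12-\eta''}$. Since $|F_{c_j}|\le (2X+1)^2$ trivially, it is enough to treat $s=19$: larger $s$ follow by pulling out trivial factors from the integrand.

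The first ingredient is a mean value estimate coming from the Parsell--Vinogradov system. The function $F_{c}$ is the two-dimensional Weyl sum over the binary cubic monomials. Its $2t$-th moment over $[0,1)^4$ counts solutions of the system $\sum (x_i^3-x_{t+i}^3)=\dots=\sum(y_i^3-y_{t+i}^3)=0$. We embed this system in the full Parsell--Vinogradov system of degree $3$ in dimension $2$, with monomials $x^iy^j$ for $1\le i+j\le 3$, which has $9$ equations and total degree $K=18$. The recent sharp bounds (via decoupling, Guo--Zhang / Bourgain--Demeter--Guth type) then give $J_{t}\ll X^{4t-18+\epsilon}$ for $2t\ge 2\cdot 18/2\cdot$(appropriate critical exponent), with the lower-degree equations reinstated by the usual shifting/averaging over the lower-degree coefficients. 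Concretely, we aim for
\[
\int_{[0,1)^4}|F_c(\boldsymbol\alpha)|^{2t}d\boldsymbol\alpha\ll X^{4t-12+\epsilon}
\]
for $2t=18$. The passage from the full system to our $4$-equation cubic system costs the factor $X^{6}$ corresponding to the degree-$\le 2$ slices. This is exactly where a multidimensional shifting-variables argument is needed: translating $(x,y)\mapsto(x+h,y+k)$ generates the lower-degree monomials, and we then average over $(h,k)$ to recover an essentially diagonal count.

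The second ingredient is a minor arc pointwise bound $\sup_{\mathfrak{n}_\delta}|F_c|\ll X^{2-\tau}$ for some small $\tau>0$, obtained by Weyl differencing in $x$ with $y$ fixed, or through a Vaughan-type bound for the sum over $x$ combined with the rational approximation produced by Dirichlet. This alone is wasteful, because $\delta$ is tiny. We therefore use pruning: we decompose $\mathfrak{n}_\delta$ into a set $\mathfrak{m}$ where some coefficient has no approximation with $q\le X^{3/4}$, and wider arcs $\mathfrak{N}(Q)\setminus\mathfrak{N}_\delta$. On $\mathfrak{m}$ the Weyl bound saves a power, and we combine it with the $18$th moment to handle $s=19$. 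On the wide arcs we use the approximation $F_c\approx q^{-2}S(q,c\boldsymbol a)v(\boldsymbol\beta)$ together with $S(q,\boldsymbol a)\ll q^{2-1/3+\epsilon}$ and $v(\boldsymbol\beta)\ll X^2(1+X^3|\boldsymbol\beta|)^{-1/3}$. We then integrate a mixed product containing $|F|^{18}$ against a major-arc-type function, in the standard pruning-lemma form. This yields a saving $X^{-\eta''}$ from the condition $q+X^3|\boldsymbol\beta|>X^\delta$.

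We expect the main obstacle to be the mean value step: establishing the $18$th moment bound $X^{24+\epsilon}$, rather than the $20$th that the direct approach gives. Getting this sharp requires the shifting-variables argument to convert the full Parsell--Vinogradov bound into the restricted cubic system without losing a power. Here we will try shifting both variables by $(h,k)\in[1,X^{\theta}]^2$ and applying Hölder over the shifts.
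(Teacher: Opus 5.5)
\textbf{There is a genuine gap.} Your reduction (H\"older over $j$, then trivial bounds down to $s=19$) matches the paper's. Everything after that, however, rests on the full-torus bound $\int_{[0,1)^4}|F|^{18}\,d\boldsymbol{\alpha}\ll X^{24+\epsilon}$. This bound does not follow from the inputs you cite, and the route you sketch cannot produce it.

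Your bookkeeping is off. The full system with monomials $x^iy^j$, $1\le i+j\le 3$, has $K=1\cdot 2+2\cdot 3+3\cdot 4=20$. So the decoupling bound is $J_t(X)\ll X^{2t+\epsilon}+X^{4t-20+\epsilon}$, with critical point $t=10$. The lower-degree slices (two linear equations of size $X$, three quadratic ones of size $X^2$) cost $X^8$, not $X^6$. Hence $\int|F|^{2t}\ll X^8J_t(X)$ is sharp at $2t=20$, where it gives $X^{28+\epsilon}$. At $2t=18$ it gives only $X^{8}J_9(X)\ll X^{26+\epsilon}$, and $J_9(X)\gg X^{18}$ from the diagonal solutions. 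Interpolating the $20$th moment with the Hua-type bound $\int|F|^8\ll X^{10+\epsilon}$ improves this only to $X^{25+\epsilon}$.

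Shifting variables cannot remove this loss on the whole torus. Its saving comes from cancellation in the sum over the slice values $\boldsymbol{h}$ (the sum $T(\boldsymbol{\alpha},\boldsymbol{\beta};X,sX^2)$ in the proof of Lemma~\ref{lemma2.2}). That cancellation exists only when some $\alpha_l$ lies on a minor arc. This is why that lemma saves $H^{-1}+X^{-1}$ on $\mathcal{M}_l(H)$ and nothing near $\boldsymbol{\alpha}=\boldsymbol{0}$. With the moments actually available, ``pointwise Weyl bound times a full-torus moment'' needs the $20$th moment and gives only $s\ge 21$.

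If the sharp $18$th moment were available, any power saving in $\sup_{\mathfrak{n}_\delta}|F|$ would finish the proof at once, and your wide-arc pruning would be superfluous. The unproved moment is carrying the whole argument. The wide-arc step also has its own problem. Off your $\mathfrak{m}$, each $\alpha_i$ has its own approximation $a_i/q_i$ with $q_i\le X^{3/4}$, but these need not share a small common denominator. So $F\approx q^{-2}S(q,\boldsymbol{a})v(\boldsymbol{\beta})$ is not available there.

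The missing idea is to use no sharp moment below the $20$th. Instead, gain on the $20$th moment from a minor-arc condition, and compensate near rationals with a \emph{restricted} low moment. The paper covers $\mathfrak{n}_\delta$ by dyadic pieces $\mathfrak{P}_l(H)\subseteq\mathfrak{M}(H)^4$, $X^{\delta/100}\le H\le X^{3/2}$, in each of which one coordinate lies in $\mathfrak{m}(H/2)$. It then combines three estimates.
\begin{itemize}
\item Lemma~\ref{lemma2.2} together with $J_{10}(2X)\ll X^{20+\epsilon}$ gives $\int_{\mathfrak{P}_l(H)}|F|^{20}\ll X^{28+\epsilon}(H^{-1}+X^{-1})$.
\item Hua's lemma, applied only to the equations in $x^3$ and $y^3$ with $(\alpha_2,\alpha_3)$ confined to $\mathfrak{M}(H)^2$ (Lemma~\ref{prop3.1}), gives $\int_{\mathfrak{P}_l(H)}|F|^{8}\ll X^{10+\epsilon}(H^2X^{-3})^2=X^{4+\epsilon}H^4$.
\item H\"older with $19=\tfrac{11}{12}\cdot 20+\tfrac{1}{12}\cdot 8$ yields $X^{26+\epsilon}H^{-7/12}+X^{301/12+\epsilon}H^{1/3}\ll X^{26-\delta'}$ throughout the range of $H$.
\end{itemize}
Because everything is done on products of one-dimensional arcs, no common denominator and no major-arc approximation of $F$ is ever needed.
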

We shall provide the proof of Lemma $\ref{lemma1.3}$ in section \ref{sec3}. By Lemma \ref{lemma1.2} and \ref{lemma1.3}, one infers from (\ref{major and minor arcs dissection}) that we have $(\ref{expected asymptotic formula})$, provided that $s\geq 19.$ This completes the proof of Theorem \ref{theorem 1.1}.

The main ingredients of the proof of Lemma $\ref{lemma1.3}$ are the sharp estimate for the cubic Parsell-Vinogradov's system \cite{MR3709122} and a generalization of the shifting variables argument originating from \cite{MR2913181}, together with application of a pruning argument. In section \ref{sec2}, we provide a multidimensional shifting variables argument as a generalization of shifting variables argument. In section \ref{sec3}, we provide the proof of Lemma \ref{lemma1.3} via Lemma \ref{lemma2.2} proved in section \ref{sec2}, together with a pruning argument.
We emphasize that applying the argument described in this paper, together with the sharp estimate for Parsell-Vinogradov's system \cite{MR3994585} and further generalized shifting variables argument, should improve the result of Parsell \cite[Theorem 1.2]{MR2485413}, giving the asymptotic bound for the number of linear spaces lying on diagonal hypersurfaces defined by 
$\sum_{i=1}^sc_iz_i^k=0,\ \text{with non-zero integers}\ c_i\ (1\leq i\leq s)$ and $k\geq 3.$

\bigskip
 
 \section{Multidimensional shifting variables argument}\label{sec2}


We begin this section by defining 
\begin{equation}\label{def1.1}    F(\boldsymbol{\alpha}):=\sum_{1\leq x,y\leq X}e(\alpha_1x^3+\alpha_2x^2y+\alpha_3xy^2+\alpha_4y^3),
\end{equation}
with $\boldsymbol{\alpha}=(\alpha_1,\alpha_2,\alpha_3,\alpha_4).$ 
In this section, we provide a key lemma associated with mean values of the exponential sum $F(\boldsymbol{\alpha})$, which plays a crucial role in  the proof of Lemma \ref{lemma1.3}.

We further define an auxiliary exponential sum $G(\boldsymbol{\alpha},\boldsymbol{\beta},\boldsymbol{\theta})$ with $\boldsymbol{\alpha}\in \R^4$, $\boldsymbol{\beta}\in \R^3$ and $\boldsymbol{\theta}\in \R^2$. In advance of defining this exponential sum, 
 we let $\nu_d(x,y):\R^2\rightarrow \R^N$ be the Veronese embedding with $N:=\binom{d+1}{d},$ defined by listing all the monomials of degree $d$ in two variables using the lexicographical ordering. Write $(\nu_{d}(x,y))_l$ for the $l$-th coordinate of $\nu_{d}(x,y).$ Define the exponential sum $G(\boldsymbol{\alpha}, \boldsymbol{\beta}, \boldsymbol{\theta})$ by
\begin{equation}\label{Galphabeta}
\begin{aligned}
G(\boldsymbol{\alpha},\boldsymbol{\beta},\boldsymbol{\theta})&:=G(\boldsymbol{\alpha},\boldsymbol{\beta},\boldsymbol{\theta};X)\\
&=\sum_{1\leq x,y\leq X}e\left(\sum_{l=1}^4\alpha_l(\nu_{3}(x,y))_l+\sum_{l=1}^3\beta_l(\nu_{2}(x,y))_l+\sum_{l=1}^2\theta_l(\nu_1(x,y))_l\right).
\end{aligned}
\end{equation}
Define
$J_s(X):= J_{s,2,3}(X)$ by
\begin{equation*}
   J_{s,2,3}(X)=\dint_{[0,1)^2}\dint_{[0,1)^3}\dint_{[0,1)^4}|G(\boldsymbol{\alpha},\boldsymbol{\beta},\boldsymbol{\theta};X)|^{2s}d\boldsymbol{\alpha}d\boldsymbol{\beta}d\boldsymbol{\theta}.
\end{equation*}
For a given $s\in \mathbb{N},$ define 
 \begin{equation*}
     \sigma_{s,d,l}(\boldsymbol{x},\boldsymbol{y})=\sum_{i=1}^s (\nu_{d}(x_i,y_i))_l-\sum_{i=s+1}^{2s} (\nu_{d}(x_i,y_i))_l.
 \end{equation*}
 For example, we see that
 \begin{equation*}
     \sigma_{s,3,2}(\boldsymbol{x},\boldsymbol{y})=\sum_{i=1}^s x_i^2y_i-\sum_{i=s+1}^{2s} x_i^2y_i
     \end{equation*}
     and
     \begin{equation*}
     \sigma_{s,2,3}(\boldsymbol{x},\boldsymbol{y})=\sum_{i=1}^s y_i^2-\sum_{i=s+1}^{2s} y_i^2.
 \end{equation*}


To facilitate the statement of the following lemma, we define the major arcs here by
\begin{equation*}
    \mathfrak{M}(H):=\bigcup_{\substack{1\leq a\leq q\leq H\\ (q,a)=1}}\mathfrak{M}_{q,a}(H),
\end{equation*}
where 
\begin{equation*}
    \mathfrak{M}_{q,a}(H):=\{\alpha\in [0,1):\ |\alpha-a/q|\leq q^{-1}HX^{-3}\}.
\end{equation*}
Define $\mathfrak{m}(H)=[0,1)\setminus \mathfrak{M}(H).$

\begin{lemma}\label{lemma2.2}
Let $H$ and $X$ be positive numbers with $H\leq X^{3/2}$. Suppose that $s$ is a natural number. Define
\begin{equation*}
    \begin{aligned}
        \mathcal{M}_1(H)&:=\mathfrak{m}(H)\times [0,1)\times [0,1)\times [0,1)\\
        \mathcal{M}_2(H)&:=[0,1)\times\mathfrak{m}(H)\times [0,1)\times [0,1)\\
        \mathcal{M}_3(H)&:=[0,1)\times[0,1)\times\mathfrak{m}(H)\times [0,1)\\
        \mathcal{M}_4(H)&:=[0,1)\times[0,1)\times[0,1)\times\mathfrak{m}(H).
    \end{aligned}
\end{equation*}
Then, for $1\leq l\leq 4,$ one has
   \begin{equation*}
       \begin{aligned}
\int_{\mathcal{M}_l(H)}|F(\boldsymbol{\alpha})|^{2s}d\alpha_1d\alpha_2d\alpha_3d\alpha_4\ll  X^{8} (\log X)^{4s}\cdot J_s(2X)\cdot (H^{-1}+X^{-1}).
       \end{aligned}
   \end{equation*} 
\end{lemma}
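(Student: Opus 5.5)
The plan is to adapt Wooley's shifting-variables argument \cite{MR2913181} to two variables, using a two-parameter family of translations $(x,y)\mapsto (x+h,y+k)$. I treat $l=1$ in detail; the case $l=4$ follows by the symmetry $x\leftrightarrow y$, and $l=2,3$ are handled the same way, with the roles of the coefficients permuted.

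\textbf{Step 1: shifting.} For $1\le h,k\le X$, substitute $x\mapsto x+h$, $y\mapsto y+k$ in $F(\boldsymbol\alpha)$. The summation box becomes $[1-h,X-h]\times[1-k,X-k]\subseteq[-X,X]^2$. The cubic part of the phase is unchanged. The quadratic part acquires the coefficients
\begin{equation*}
\boldsymbol\beta(h,k)=(3\alpha_1h+\alpha_2k,\ 2\alpha_2h+2\alpha_3k,\ \alpha_3h+3\alpha_4k),
\end{equation*}
and the linear part acquires coefficients $\boldsymbol\theta(h,k)$ that are quadratic in $h,k$; the constant term has modulus one and disappears.

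I remove the dependence of the box on $(h,k)$ by the standard completion: write the indicator of each interval as an integral over additive characters in $x$ and in $y$. Each such integral costs a factor $\log X$. After H\"older's inequality this gives
\begin{equation*}
|F(\boldsymbol\alpha)|^{2s}\ll (\log X)^{4s}\sup_{\boldsymbol\phi}\bigl|G\bigl(\boldsymbol\alpha,\boldsymbol\beta(h,k),\boldsymbol\theta(h,k)+\boldsymbol\phi;2X\bigr)\bigr|^{2s},
\end{equation*}
after translating the box to $[1,2X]^2$ (which only alters the lower-degree coefficients). Averaging over all $(h,k)$ yields
\begin{equation*}
\int_{\mathcal M_1(H)}|F|^{2s}\,d\boldsymbol\alpha\ll X^{-2}(\log X)^{4s}\sum_{h,k\le X}\int_{\mathcal M_1(H)}\bigl|G(\boldsymbol\alpha,\boldsymbol\beta(h,k),\boldsymbol\theta(h,k)+\boldsymbol\phi)\bigr|^{2s}d\boldsymbol\alpha .
\end{equation*}

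\textbf{Step 2: freeing the lower-degree variables.} The next task is to turn the sum over $(h,k)$ into an integral over $\boldsymbol\beta$ and $\boldsymbol\theta$. The function $\Phi(\boldsymbol\beta,\boldsymbol\theta)=|G|^{2s}$ is essentially constant on boxes of side $X^{-2}$ in $\boldsymbol\beta$ and $X^{-1}$ in $\boldsymbol\theta$, because the monomials it carries are of size at most $X^2$ and $X$ respectively. Rigorously, I would majorise it by its average over such a box, using a Fej\'er kernel or Gallagher's lemma. That replaces each point value by
\begin{equation*}
X^{4}X^{6}\int_{\text{box}}\Phi .
\end{equation*}
Summing over $(h,k)$ then gives $X^{10}$ times
\begin{equation*}
\int\Phi\cdot\#\{(h,k):\ (\boldsymbol\beta(h,k),\boldsymbol\theta(h,k))\ \text{lies in a given box}\}.
\end{equation*}
Integrating over the unconstrained $\alpha_2,\alpha_3,\alpha_4$ and over the boxes recovers $J_s(2X)$.

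\textbf{Step 3: counting the shifts (the main obstacle).} This is where the hypothesis $\alpha_1\in\mathfrak m(H)$ has to be used. I would not demand simultaneous coincidence of all five coordinates. Instead I would use only the first coordinate, $3\alpha_1h+\alpha_2k$, together with $\alpha_1\in\mathfrak m(H)$, and retain the integrals in the remaining directions. For fixed $k$, I would bound the number of $h\le X$ with
\begin{equation*}
\|3\alpha_1h-c\|\le X^{-2}.
\end{equation*}
Take a Dirichlet approximation $|\alpha_1-a/q|\le q^{-1}X^{-3/2}$ with $q\le X^{3/2}$; this is where $H\le X^{3/2}$ enters. Since $\alpha_1\notin\mathfrak M(H)$, either $q>H$ or $q|q\alpha_1-a|>HX^{-3}$. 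A standard spacing argument then bounds the count by
\begin{equation*}
\ll 1+X/q+Xq^{-1}(q|q\alpha_1-a|X^{3})^{-1}\cdots\ \ll\ 1+X H^{-1}.
\end{equation*}
This saves the factor $X(H^{-1}+X^{-1})$ over the trivial count $X$. Tracking exponents, the final bound should be
\begin{equation*}
X^{-2}\cdot X\cdot X(H^{-1}+X^{-1})\cdot X^{8}\cdot J_s(2X),
\end{equation*}
which matches the claimed $X^{8}(\log X)^{4s}J_s(2X)(H^{-1}+X^{-1})$. The delicate point, which I expect to be the main difficulty, is the bookkeeping of the normalising powers of $X$: the smoothing in Step 2 must be arranged so that only the $\beta_1$ direction is counted against the shifts while the other directions are absorbed into the full integral defining $J_s(2X)$. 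To handle this, I would first try to pass from the point values to $J_s$ via the orthogonality representation of $|G|^{2s}$ as a count of solutions to the system $\sigma_{s,d,l}(\boldsymbol x,\boldsymbol y)=\cdots$.
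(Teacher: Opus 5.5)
Your outline works once the points below are repaired, and it is essentially the Fourier dual of the paper's argument.

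\textbf{Comparison with the paper.} The paper never shifts $F$ directly. It first inserts $\boldsymbol\theta$ and $\boldsymbol\beta$ by orthogonality, passing from $F$ to $\widetilde F$ to $G$ and summing over the values $m_i,h_i$ of the linear and quadratic moments. It then shifts by $(z_1,z_2)$ inside $G$ and completes the box with $K$. Finally it uses the binomial theorem on the solution set of $\sigma_{s,2,i}=h_i$, $\sigma_{s,1,i}=0$ to see that the shift contributes only the phase $e(-z_1L_1(\boldsymbol h,\boldsymbol\alpha)-z_2L_2(\boldsymbol h,\boldsymbol\alpha))$.

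The coefficient of $h_i$ in $z_1L_1+z_2L_2$ is exactly the $i$-th coordinate of your $\boldsymbol\beta(z_1,z_2)$. So the paper's $T(\boldsymbol\alpha,\boldsymbol\beta;X,sX^2)$ is a Dirichlet-kernel-weighted count of the shifts for which $\boldsymbol\beta(z_1,z_2)$ lies near $-\boldsymbol\beta$ modulo $1$, which is your collision count. The paper's route gets exact identities from orthogonality and needs no majorant principle. Yours is more direct and geometrically transparent, but needs a rigorous local-constancy step. Both end in the same one-variable bound $\sum_{z\le X}\min(X^2,\|z\alpha+c\|^{-1})\ll X^{3+\epsilon}(H^{-1}+X^{-1})$.

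\textbf{Normalisation error in Step 2.} Three $\boldsymbol\beta$-directions of width $X^{-2}$ and two $\boldsymbol\theta$-directions of width $X^{-1}$ give a factor $X^{6}\cdot X^{2}=X^8$, not $X^4X^6=X^{10}$. With $X^{10}$ you would lose $X^2$; your final tally silently uses $X^8$.

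\textbf{Making ``essentially constant'' rigorous.} Do it with a kernel, after which the ``delicate point'' disappears.
\begin{itemize}
\item Let $V$ be a product of de la Vall\'ee-Poussin kernels, of degree $\asymp X^2$ in each $\beta_i$ and $\asymp X$ in each $\theta_i$, with Fourier coefficients equal to $1$ on the frequency support of $G^s$.
\item Write $G^s$ as its convolution with $V$ and apply Cauchy--Schwarz. This gives $|G(\boldsymbol\alpha,\boldsymbol\beta_0,\boldsymbol\theta_0)|^{2s}\ll\int|G(\boldsymbol\alpha,\boldsymbol\beta,\boldsymbol\theta)|^{2s}|V(\boldsymbol\beta_0-\boldsymbol\beta,\boldsymbol\theta_0-\boldsymbol\theta)|\,d\boldsymbol\beta\,d\boldsymbol\theta$.
\item Bound the four factors of $V$ other than the $\beta_1$-factor by their sup norms, $X^4\cdot X^2$; the $\boldsymbol\theta$ part also absorbs your $\sup_{\boldsymbol\phi}$. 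For the $\beta_1$-factor use $|V_{\beta_1}(u)|\ll\min(X^2,\|u\|^{-1})$.
\item The weighted count becomes $\sum_{k\le X}\sum_{h\le X}\min(X^2,\|3\alpha_1h+\alpha_2k-\beta_1\|^{-1})\ll X^{4+\epsilon}(H^{-1}+X^{-1})$.
\item To prove this, drop the factor $3$ by positivity, summing over all $h'\le 3X$. Then either take a Dirichlet approximation with $H<r\le X^3/H$ as the paper does, or use your two-case count.
\item Then $X^{-2}\cdot X^{6}\cdot X^{4+\epsilon}(H^{-1}+X^{-1})$ gives the lemma, with the same $X^\epsilon$ loss the paper's own proof incurs.
\end{itemize}
So you can smooth in all five directions and simply drop four of the five constraints in the count.

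\textbf{Minor-arc dichotomy.} $\alpha_1\notin\mathfrak M(H)$ gives $q>H$ or $|q\alpha_1-a|>HX^{-3}$, not $q|q\alpha_1-a|>HX^{-3}$.
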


\begin{proof}

Recall the definition (\ref{Galphabeta}) of $G(\boldsymbol{\alpha,\boldsymbol{\beta}},\boldsymbol{\theta}).$
   First, we shall prove that for $1\leq l\leq 4,$ one has
    \begin{equation}\label{inequality2.1}
    \begin{aligned}
       & \int_{\mathcal{M}_l(H)}|F(\boldsymbol{\alpha})|^{2s}d\boldsymbol{\alpha} \\
        &\ll X^2 \sum_{\substack{|h_i|\leq sX^2\\ i=1,2,3}}\int_{[0,1)^5}\int_{\mathcal{M}_l(H)}|G(\boldsymbol{\alpha,\boldsymbol{\beta}},\boldsymbol{\theta})|^{2s}e(-\beta_1h_1-\beta_2h_2-\beta_3h_3)d\boldsymbol{\alpha} d\boldsymbol{\beta}d\boldsymbol{\theta}.
        \end{aligned}
    \end{equation}
Let us temporarily define an exponential sum $$\widetilde{F}(\boldsymbol{\alpha},\boldsymbol{\theta}):=\sum_{1\leq x,y\leq X}e(\alpha_1x^3+\alpha_2x^2y+\alpha_3xy^2+\alpha_4y^3+\theta_1x+\theta_2y).$$
Observe that when $(m_1,m_2)\in \mathbb{Z}^2$, one has
\begin{equation}\label{equation2.2}
\begin{aligned}
&\int_{[0,1)^2}\int_{\mathcal{M}_l(H)}|\widetilde{F}(\boldsymbol{\alpha},\boldsymbol{\theta})|^{2s}e(-m_1\theta_1-m_2\theta_2)d\boldsymbol{\alpha}d\boldsymbol{\theta}\\
&=\sum_{1\leq \boldsymbol{x},\boldsymbol{y}\leq X}\delta(\boldsymbol{x},\boldsymbol{y},\boldsymbol{m})\int_{\mathcal{M}_l(H)}e\biggl(\sum_{i=1}^4\alpha_i \sigma_{s,3,i}(\boldsymbol{x},\boldsymbol{y})\biggr)d\boldsymbol{\alpha},
\end{aligned}
\end{equation}
where
\begin{equation*}
\delta(\boldsymbol{x},\boldsymbol{y},\boldsymbol{m})=\left(\int_0^1e(\theta_1(\sigma_{s,1,1}(\boldsymbol{x},\boldsymbol{y})-m_1))d\theta_1\right)\left(\int_0^1e(\theta_2(\sigma_{s,1,2}(\boldsymbol{x},\boldsymbol{y})-m_2))d\theta_2\right)
\end{equation*}
in which 
\begin{equation*}
        \sigma_{s,1,1}(\boldsymbol{x},\boldsymbol{y})=\sum_{i=1}^s x_i-\sum_{i=s+1}^{2s} x_i,
        \end{equation*}
        and
        \begin{equation*}
        \sigma_{s,1,2}(\boldsymbol{x},\boldsymbol{y})=\sum_{i=1}^s y_i-\sum_{i=s+1}^{2s} y_i.
\end{equation*}
By orthogonality, for $i=1,2$, one has
\begin{equation*}
   \int_0^1e(\theta_i(\sigma_{s,1,i}(\boldsymbol{x},\boldsymbol{y})-m_i))d\theta_i=\left\{\begin{aligned}&1,\ \text{when}\ \sigma_{s,1,i}(\boldsymbol{x},\boldsymbol{y})=m_i\\
&0,\ \text{when}\ \sigma_{s,1,i}(\boldsymbol{x},\boldsymbol{y})\neq m_i.
   \end{aligned}\right.
\end{equation*}
When $1\leq \boldsymbol{x},\boldsymbol{y}\leq X$, one has $|\sigma_{s,1,i}(\boldsymbol{x},\boldsymbol{y})|\leq sX\ (i=1,2),$ and so 
\begin{equation*}
    \sum_{\substack{|m_i|\leq sX\\i=1,2}}\delta(\boldsymbol{x},\boldsymbol{y},\boldsymbol{m})=1.
\end{equation*}
Noting that 
\begin{equation*}
  \sum_{1\leq \boldsymbol{x},\boldsymbol{y}\leq X}e\biggl(\sum_{i=1}^4\alpha_i \sigma_{s,3,i}(\boldsymbol{x},\boldsymbol{y})\biggr)=|F(\boldsymbol{\alpha})|^{2s},
\end{equation*}
one deduces from $(\ref{equation2.2})$ that
\begin{equation} 
\begin{aligned}
&\sum_{\substack{|m_i|\leq sX\\i=1,2}}\int_{[0,1)^2}\int_{\mathcal{M}_l(H)}|\widetilde{F}(\boldsymbol{\alpha},\boldsymbol{\theta})|^{2s}e(-m_1\theta_1-m_2\theta_2)d\boldsymbol{\alpha}d\boldsymbol{\theta}\\
&=\dint_{\mathcal{M}_l(H)} \sum_{1\leq \boldsymbol{x},\boldsymbol{y}\leq X}\biggl( \sum_{\substack{|m_i|\leq sX\\i=1,2}}\delta(\boldsymbol{x},\boldsymbol{y},\boldsymbol{m})\biggr)e\biggl(\sum_{i=1}^4\alpha_i \sigma_{s,3,i}(\boldsymbol{x},\boldsymbol{y})\biggr)d\boldsymbol{\alpha}\\
&=\int_{\mathcal{M}_l(H)}|F(\boldsymbol{\alpha})|^{2s}d\boldsymbol{\alpha}.
\end{aligned}
\end{equation}
Hence, by applying the triangle inequality, we have
\begin{equation}\label{inequality2.4}
\int_{\mathcal{M}_l(H)}|F(\boldsymbol{\alpha})|^{2s}d\boldsymbol{\alpha}\ll X^2\int_{[0,1)^2}\int_{\mathcal{M}_l(H)}|\widetilde{F}(\boldsymbol{\alpha},\boldsymbol{\theta})|^{2s}d\boldsymbol{\alpha}d\boldsymbol{\theta}.
\end{equation}
Through a similar argument leading from $(\ref{equation2.2})$ to ($\ref{inequality2.4}$), we deduce that
\begin{equation}\label{equation2.5}
    \begin{aligned}
&\int_{[0,1)^2}\int_{\mathcal{M}_l(H)}|\widetilde{F}(\boldsymbol{\alpha},\boldsymbol{\theta})|^{2s}d\boldsymbol{\alpha}d\boldsymbol{\theta}\\
&=\sum_{\substack{|h_i|\leq sX^2\\ i=1,2,3}}\int_{[0,1)^5}\int_{\mathcal{M}_l(H)}|G(\boldsymbol{\alpha,\boldsymbol{\beta}},\boldsymbol{\theta})|^{2s}e(-\beta_1h_1-\beta_2h_2-\beta_3h_3)d\boldsymbol{\alpha} d\boldsymbol{\beta}d\boldsymbol{\theta}.    
    \end{aligned}
\end{equation}
Therefore, we confirmed the inequality $(\ref{inequality2.1})$, by substituting $(\ref{equation2.5})$ into the right hand side of $(\ref{inequality2.4}).$

Next, by shifting the variables, we have
\begin{equation}\label{equation2.6}
G(\boldsymbol{\alpha,\boldsymbol{\beta}},\boldsymbol{\theta})=\sum_{1+z_1\leq x\leq X+z_1}\sum_{1+z_2\leq y\leq X+z_2}e(\psi(x-z_1,y-z_2;\boldsymbol{\alpha,\boldsymbol{\beta}},\boldsymbol{\theta})),
\end{equation}
where
\begin{equation*}
\psi(x,y;\boldsymbol{\alpha,\boldsymbol{\beta}},\boldsymbol{\theta})=\sum_{l=1}^4\alpha_l(\nu_{3}(x,y))_l+\sum_{l=1}^3\beta_l(\nu_{2}(x,y))_l+\sum_{l=1}^2\theta_l(\nu_1(x,y))_l.
\end{equation*}
Write 
\begin{equation*}
    K(\gamma_1,\gamma_2)=\sum_{1\leq y_1,y_2\leq X}e(-\gamma_1y_1-\gamma_2y_2).
\end{equation*}
Then, we deduce from $(\ref{equation2.6})$ that when $1\leq z_1,z_2\leq X$, one has
\begin{equation}\label{equation2.7}
G(\boldsymbol{\alpha,\boldsymbol{\beta}},\boldsymbol{\theta})=\int_{[0,1)^2}F_{z_1,z_2}(\boldsymbol{\alpha,\boldsymbol{\beta}},\boldsymbol{\theta};\gamma_1,\gamma_2)K(\gamma_1,\gamma_2)d\gamma_1 d\gamma_2,
\end{equation}
where 
\begin{equation}\label{def f_z_1z_2}
    F_{z_1,z_2}(\boldsymbol{\alpha,\boldsymbol{\beta}},\boldsymbol{\theta};\gamma_1,\gamma_2)=\sum_{1\leq x,y\leq 2X}e(\psi(x-z_1,y-z_2;\boldsymbol{\alpha,\boldsymbol{\beta}},\boldsymbol{\theta})+\gamma_1(x-z_1)+\gamma_2(y-z_2)).
\end{equation}

Define 
\begin{equation}\label{def F_z_1z_2}
\begin{aligned}
   & \mathcal{F}_{z_1,z_2}(\boldsymbol{\alpha,\boldsymbol{\beta}},\boldsymbol{\theta};\boldsymbol{\gamma}_1,\boldsymbol{\gamma}_2)\\
    &=\prod_{i=1}^s F_{z_1,z_2}(\boldsymbol{\alpha,\boldsymbol{\beta}},\boldsymbol{\theta};\gamma_1^{(i)},\gamma_2^{(i)}) F_{z_1,z_2}(-\boldsymbol{\alpha,-\boldsymbol{\beta}},-\boldsymbol{\theta};-\gamma_1^{(s+i)},-\gamma_2^{(s+i)}).
\end{aligned}
\end{equation}
On substituting $(\ref{equation2.7})$ into $(\ref{equation2.5})$, we deduce that when $1\leq z_1,z_2\leq X$, one has
\begin{equation}\label{2.122.12}
    \begin{aligned}
&\int_{[0,1)^2}\int_{\mathcal{M}_l(H)}|\widetilde{F}(\boldsymbol{\alpha},\boldsymbol{\theta})|^{2s}d\boldsymbol{\theta}\\
& =\sum_{\substack{|h_i|\leq sX^2\\ i=1,2,3}}\int_{[0,1)^{2s}}\int_{[0,1)^{2s}}I_{h_1,h_2,h_3}(\boldsymbol{\gamma}_1,\boldsymbol{\gamma}_2,z_1,z_2)\widetilde{K}(\boldsymbol{\gamma}_1,\boldsymbol{\gamma}_2)d\boldsymbol{\gamma}_1d\boldsymbol{\gamma}_2,
    \end{aligned}
\end{equation}
where
\begin{equation}\label{2.112.11}
\begin{aligned}
    &I_{h_1,h_2,h_3}(\boldsymbol{\gamma}_1,\boldsymbol{\gamma}_2,z_1,z_2)\\
&=\int_{[0,1)^2}\int_{[0,1)^3}\int_{\mathcal{M}_l(H)}\mathcal{F}_{z_1,z_2}(\boldsymbol{\alpha,\boldsymbol{\beta}},\boldsymbol{\theta};\boldsymbol{\gamma}_1,\boldsymbol{\gamma}_2)e(-\beta_1h_1-\beta_2h_2-\beta_3h_3)d\boldsymbol{\alpha} d\boldsymbol{\beta}d\boldsymbol{\theta}
\end{aligned}
\end{equation}
and 
\begin{equation*}
    \widetilde{K}(\boldsymbol{\gamma}_1,\boldsymbol{\gamma}_2)=\prod_{i=1}^sK(\gamma_1^{(i)},\gamma_2^{(i)})K(-\gamma_1^{(s+i)},-\gamma_2^{(s+i)}).
\end{equation*}

We temporarily define 
\begin{equation*}
   \phi(\boldsymbol{x}-z_1,\boldsymbol{\gamma}_1)=\sum_{i=1}^s\gamma_1^{(i)}(x_i-z_1)-\sum_{i=1}^{s}\gamma_1^{(s+i)}(x_{s+i}-z_1)
\end{equation*}
and
\begin{equation*}
   \phi(\boldsymbol{y}-z_2,\boldsymbol{\gamma}_2)=\sum_{i=1}^s\gamma_2^{(i)}(y_i-z_2)-\sum_{i=1}^{s}\gamma_2^{(s+i)}(y_{s+i}-z_2).
\end{equation*}
Then, it follows by orthogonality that one has
\begin{equation}\label{2.92.92.9}
\begin{aligned}
&\int_{[0,1)^2}\int_{[0,1)^3}\mathcal{F}_{z_1,z_2}(\boldsymbol{\alpha,\boldsymbol{\beta}},\boldsymbol{\theta};\boldsymbol{\gamma}_1,\boldsymbol{\gamma}_2)e(-\beta_1h_1-\beta_2h_2-\beta_3h_3) d\boldsymbol{\beta}d\boldsymbol{\theta}\\
&=\sum_{1\leq \boldsymbol{x},\boldsymbol{y}\leq 2X}\Delta(\boldsymbol{\alpha},\boldsymbol{\gamma}_1,\boldsymbol{\gamma}_2,h_1,h_2,h_3,z_1,z_2),
\end{aligned}
\end{equation}
where 
\begin{equation*}
\begin{aligned}
&\Delta(\boldsymbol{\alpha},\boldsymbol{\gamma}_1,\boldsymbol{\gamma}_2,h_1,h_2,h_3,z_1,z_2)\\
&=e\biggl(\sum_{i=1}^4\alpha_i\sigma_{s,3,i}(\boldsymbol{x}-z_1,\boldsymbol{y}-z_2)+\phi(\boldsymbol{x}-z_1,\boldsymbol{\gamma}_1)+\phi(\boldsymbol{y}-z_2,\boldsymbol{\gamma}_2)\biggr),
\end{aligned}
\end{equation*}
when
\begin{equation}\label{2.9}
    \begin{aligned}
        &\sigma_{s,2,i}(\boldsymbol{x}-z_1,\boldsymbol{y}-z_2)=h_i\ (i=1,2,3)\\
        &\sigma_{s,1,i}(\boldsymbol{x}-z_1,\boldsymbol{y}-z_2)=0\ \ \ (i=1,2),
    \end{aligned}
\end{equation}
and otherwise $\Delta(\boldsymbol{\alpha},\boldsymbol{\gamma}_1,\boldsymbol{\gamma}_2,h_1,h_2,h_3,z_1,z_2)$ is equal to $0.$

By applying the Binomial Theorem, one sees that whenever $(\ref{2.9})$ is satisfied for $2s$-tuples $\boldsymbol{x}$ and $\boldsymbol{y}$, we have
    \begin{equation}\label{2.10}
    \begin{aligned}
        &\sigma_{s,2,i}(\boldsymbol{x},\boldsymbol{y})=h_i\ (i=1,2,3)\\
        &\sigma_{s,1,i}(\boldsymbol{x},\boldsymbol{y})=0\ \ \ (i=1,2),
    \end{aligned}
\end{equation}
and hence
\begin{equation*}
    \begin{aligned}
        &\sum_{i=1}^4\alpha_i\sigma_{s,3,i}(\boldsymbol{x}-z_1,\boldsymbol{y}-z_2)\\
        &=\sum_{i=1}^4\alpha_i\sigma_{s,3,i}(\boldsymbol{x},\boldsymbol{y})-3h_1z_1\alpha_1-(h_1z_2+2h_2z_1)\alpha_2-(h_3z_1+2h_2z_2)\alpha_3-3h_3z_2\alpha_4,
    \end{aligned}
\end{equation*}
where we have used the Binomial Theorem in the last equality.

Therefore, on writing that
\begin{equation}\label{def L_1L_2}
\begin{aligned}
   L_1(\boldsymbol{h},\boldsymbol{\alpha})&:= 3h_1\alpha_1+2h_2\alpha_2+h_3\alpha_3,\\
  L_2(\boldsymbol{h},\boldsymbol{\alpha})&:= h_1\alpha_2+2h_2\alpha_3+3h_3\alpha_4
\end{aligned}
\end{equation}
and
\begin{equation}\label{def L_3}
L_3(\boldsymbol{h},\boldsymbol{\beta}):=\beta_1h_1+\beta_2h_2+\beta_3h_3,
\end{equation} 
it follows from $(\ref{2.92.92.9})$ that
\begin{equation*}
    \begin{aligned}
&\int_{[0,1)^2}\int_{[0,1)^3}\mathcal{F}_{z_1,z_2}(\boldsymbol{\alpha,\boldsymbol{\beta}},\boldsymbol{\theta};\boldsymbol{\gamma}_1,\boldsymbol{\gamma}_2)e(-\beta_1h_1-\beta_2h_2-\beta_3h_3) d\boldsymbol{\beta}d\boldsymbol{\theta}  \\
&=w_{\boldsymbol{\gamma}_1,\boldsymbol{\gamma}_2,z_1,z_2}\int_{[0,1)^2}\int_{[0,1)^3}\mathcal{F}_{0,0}(\boldsymbol{\alpha,\boldsymbol{\beta}},\boldsymbol{\theta};\boldsymbol{\gamma}_1,\boldsymbol{\gamma}_2)\\
&\ \ \ \ \ \ \ \ \ \ \ \ \ \ \ \ \ \ \ \ \ \ \ \ \ \ \ \ \ \ \ \ \ \ \ \ \ \ \  \ \ \ \ \ \ \cdot e(-z_1L_1(\boldsymbol{h},\boldsymbol{\alpha})-z_2L_2(\boldsymbol{h},\boldsymbol{\alpha})-L_3(\boldsymbol{h},\boldsymbol{\beta})) d\boldsymbol{\beta}d\boldsymbol{\theta},
    \end{aligned}
\end{equation*}
where 
\begin{equation*}
w_{\boldsymbol{\gamma}_1,\boldsymbol{\gamma}_2,z_1,z_2}=e\biggl(-z_1\biggl(\sum_{i=1}^s\gamma_1^{(i)}-\sum_{i=1}^s\gamma_{1}^{(s+i)}\biggr)-z_2\biggl(\sum_{i=1}^s\gamma_2^{(i)}-\sum_{i=1}^s\gamma_{2}^{(s+i)}\biggr)\biggr).
\end{equation*}
From this, we deduce from $(\ref{2.112.11})$ that
\begin{equation}\label{2.132.13}
    \begin{aligned}
       &\sum_{\substack{|h_i|\leq sX^2\\ i=1,2,3}} I_{h_1,h_2,h_3}(\boldsymbol{\gamma}_1,\boldsymbol{\gamma}_2,z_1,z_2)\\
       &=w_{\boldsymbol{\gamma}_1,\boldsymbol{\gamma}_2,z_1,z_2}\int_{[0,1)^2}\int_{[0,1)^3}\int_{\mathcal{M}_l(H)}\mathcal{F}_{0,0}(\boldsymbol{\alpha,\boldsymbol{\beta}},\boldsymbol{\theta};\boldsymbol{\gamma}_1,\boldsymbol{\gamma}_2)\\
&\ \  \ \ \ \ \ \ \ \ \ \ \ \ \ \ \ \ \ \ \  \ \ \ \cdot \sum_{\substack{|h_i|\leq sX^2\\ i=1,2,3}}e(-z_1L_1(\boldsymbol{h},\boldsymbol{\alpha})-z_2L_2(\boldsymbol{h},\boldsymbol{\alpha})-L_3(\boldsymbol{h},\boldsymbol{\beta})) d\boldsymbol{\alpha}d\boldsymbol{\beta}d\boldsymbol{\theta}\\
&\ll \int_{[0,1)^2}\int_{[0,1)^3}\int_{\mathcal{M}_l(H)}|\mathcal{F}_{0,0}(\boldsymbol{\alpha,\boldsymbol{\beta}},\boldsymbol{\theta};\boldsymbol{\gamma}_1,\boldsymbol{\gamma}_2)|\\
&\ \  \ \ \ \ \ \ \ \ \ \ \ \ \ \ \ \ \ \ \  \ \ \cdot \biggl|\sum_{\substack{|h_i|\leq sX^2\\ i=1,2,3}}e(-z_1L_1(\boldsymbol{h},\boldsymbol{\alpha})-z_2L_2(\boldsymbol{h},\boldsymbol{\alpha})-L_3(\boldsymbol{h},\boldsymbol{\beta})) \biggr|d\boldsymbol{\alpha}d\boldsymbol{\beta}d\boldsymbol{\theta}.
\end{aligned}
\end{equation}

For simplicity, we temporarily define the exponential sum $T(\boldsymbol{\alpha},\boldsymbol{\beta}):=T(\boldsymbol{\alpha},\boldsymbol{\beta};X,Y)$ by
\begin{equation*}
\begin{aligned}
T(\boldsymbol{\alpha},\boldsymbol{\beta};X,Y):=\sum_{1\leq z_1,z_2\leq X}\biggl|\sum_{\substack{|h_i|\leq Y\\ i=1,2,3}}e(-z_1L_1(\boldsymbol{h},\boldsymbol{\alpha})-z_2L_2(\boldsymbol{h},\boldsymbol{\alpha})-L_3(\boldsymbol{h},\boldsymbol{\beta}))\biggr|.
\end{aligned}
\end{equation*} 
Then, we conclude from $(\ref{2.132.13})$ that 
\begin{equation*}
\begin{aligned}
    &\sum_{ 1\leq z_1,z_2\leq X}\sum_{\substack{|h_i|\leq sX^2\\ i=1,2,3}} I_{h_1,h_2,h_3}(\boldsymbol{\gamma}_1,\boldsymbol{\gamma}_2,z_1,z_2)\\
    &\ll \int_{[0,1)^2}\int_{[0,1)^3}\int_{\mathcal{M}_l(H)}|\mathcal{F}_{0,0}(\boldsymbol{\alpha,\boldsymbol{\beta}},\boldsymbol{\theta};\boldsymbol{\gamma}_1,\boldsymbol{\gamma}_2)|\cdot |T(\boldsymbol{\alpha},\boldsymbol{\beta};X,sX^2)|d\boldsymbol{\alpha}d\boldsymbol{\beta}d\boldsymbol{\theta}\\
    &\leq \int_{[0,1)^9}|\mathcal{F}_{0,0}(\boldsymbol{\alpha,\boldsymbol{\beta}},\boldsymbol{\theta};\boldsymbol{\gamma}_1,\boldsymbol{\gamma}_2)|d\boldsymbol{\alpha}d\boldsymbol{\beta}d\boldsymbol{\theta}\cdot \sup_{\substack{\boldsymbol{\alpha}\in \mathcal{M}_l(H)\\ \boldsymbol{\beta}\in [0,1)^3 }}|T(\boldsymbol{\alpha},\boldsymbol{\beta};X,sX^2)|.
    \end{aligned}
\end{equation*}

Recall the definitions $(\ref{def f_z_1z_2})$ and $(\ref{def F_z_1z_2})$. Then, we deduce by applying the H\"older's inequality that
\begin{equation}\label{almost final}
    \begin{aligned}
        &\sum_{1\leq z_1,z_2\leq X}\sum_{\substack{|h_i|\leq sX^2\\ i=1,2,3}} I_{h_1,h_2,h_3}(\boldsymbol{\gamma}_1,\boldsymbol{\gamma}_2,z_1,z_2)\\
&\leq \prod_{i=1}^{2s}\biggl(\int_{[0,1]^{9}}|F_{0,0}(\boldsymbol{\alpha,\boldsymbol{\beta}},\boldsymbol{\theta};\gamma_1^{(i)},\gamma_2^{(i)})|^{2s}d\boldsymbol{\alpha}d\boldsymbol{\beta}d\boldsymbol{\theta}\biggr)^{1/2s}\cdot \sup_{\substack{\boldsymbol{\alpha}\in \mathcal{M}_l(H)\\ \boldsymbol{\beta}\in [0,1)^3 }}|T(\boldsymbol{\alpha},\boldsymbol{\beta};X,sX^2)|\\
&\leq \sup_{(\gamma_1,\gamma_2)\in[0,1)^2}\int_{[0,1]^{9}}|F_{0,0}(\boldsymbol{\alpha,\boldsymbol{\beta}},\boldsymbol{\theta};\gamma_1,\gamma_2)|^{2s}d\boldsymbol{\alpha}d\boldsymbol{\beta}d\boldsymbol{\theta}\cdot \sup_{\substack{\boldsymbol{\alpha}\in \mathcal{M}_l(H)\\ \boldsymbol{\beta}\in [0,1)^3 }}|T(\boldsymbol{\alpha},\boldsymbol{\beta};X,sX^2)|\\
&\leq J_{s}(2X)\cdot \sup_{\substack{\boldsymbol{\alpha}\in \mathcal{M}_l(H)\\ \boldsymbol{\beta}\in [0,1)^3 }}|T(\boldsymbol{\alpha},\boldsymbol{\beta};X,sX^2)|.
    \end{aligned}
\end{equation}
Therefore, we conclude from $(\ref{inequality2.4}),(\ref{2.122.12})$ and $(\ref{almost final})$ that 
\begin{equation}\label{final}
    \begin{aligned}
&\int_{\mathcal{M}_l(H)}|F(\boldsymbol{\alpha})|^{2s}d\boldsymbol{\alpha}\\
&\ll X^2\int_{[0,1)^2}\int_{\mathcal{M}_l(H)}|\widetilde{F}(\boldsymbol{\alpha},\boldsymbol{\theta})|^{2s}d\boldsymbol{\theta}\\
&\leq \sum_{1\leq z_1,z_2\leq X}\sum_{\substack{|h_i|\leq sX^2\\ i=1,2,3}}\int_{[0,1)^{2s}}\int_{[0,1)^{2s}}I_{h_1,h_2,h_3}(\boldsymbol{\gamma}_1,\boldsymbol{\gamma}_2,z_1,z_2)\widetilde{K}(\boldsymbol{\gamma}_1,\boldsymbol{\gamma}_2)d\boldsymbol{\gamma}_1d\boldsymbol{\gamma}_2\\
&= J_{s}(2X)\cdot \int_{[0,1)^{2s}}\int_{[0,1)^{2s}}\widetilde{K}(\boldsymbol{\gamma}_1,\boldsymbol{\gamma}_2)d\boldsymbol{\gamma}_1d\boldsymbol{\gamma}_2\cdot \sup_{\substack{\boldsymbol{\alpha}\in \mathcal{M}_l(H)\\ \boldsymbol{\beta}\in [0,1)^3 }}|T(\boldsymbol{\alpha},\boldsymbol{\beta};X,sX^2)|.
    \end{aligned}
\end{equation}

Meanwhile, one has
\begin{equation*}
\begin{aligned}
    \int_{[0,1)^2} |K(\gamma_1,\gamma_2)|d\gamma_1d\gamma_2&\ll \int_{[0,1]^2}\min\{X,\|\gamma_1\|^{-1}\}\cdot\min\{X,\|\gamma_2\|^{-1}\}d\gamma_1d\gamma_2\\
    &\ll (\log X)^2,
\end{aligned}
\end{equation*}
and hence
\begin{equation}\label{K(gamma_1,gamma_2)}
\int_{[0,1)^{2s}}\int_{[0,1)^{2s}}\widetilde{K}(\boldsymbol{\gamma}_1,\boldsymbol{\gamma}_2)d\boldsymbol{\gamma}_1d\boldsymbol{\gamma}_2\ll (\log X)^{4s}.
\end{equation}
Additionally, one deduces that
\begin{equation}\label{bound for T}
\begin{aligned}
    &\sup_{\substack{\boldsymbol{\alpha}\in \mathcal{M}_2(H)\\ \boldsymbol{\beta}\in [0,1)^3 }}|T(\boldsymbol{\alpha},\boldsymbol{\beta};X,sX^2)|\\
    &\ll X^5\sup_{\substack{\boldsymbol{\alpha}\in \mathcal{M}_2(H)\\ \boldsymbol{\beta}\in [0,1)^3\\ 1\leq z_2\leq X }} \sum_{1\leq z_1\leq X}\biggl|\sum_{|h_3|\leq sX^2}e(-z_1h_3\alpha_3-3z_2h_3\alpha_4-\beta_3h_3)\biggr|\\
    &\ll X^5\sup_{\substack{\boldsymbol{\alpha}\in \mathcal{M}_2(H)\\ \boldsymbol{\beta}\in [0,1)^3 \\1\leq z_2\leq X}}\sum_{1\leq z_1\leq X}\text{min}\biggl(X^2,\frac{1}{\|z_1\alpha_3+3z_2\alpha_4+\beta_3\|}\biggr).
\end{aligned}
\end{equation}

Suppose that $|\alpha_3-b/r|\leq r^{-2}$ with $r\in \mathbb{N}$, $b\in \mathbb{Z}$ and $(r,b)=1$. Then, it follows by \cite[Lemma 3.2]{MR865981} that 
\begin{equation}\label{asdf}
    \sum_{1\leq z_1\leq X}\text{min}\biggl(X^2,\frac{1}{\|z_1\alpha_3+3z_2\alpha_4+\beta_3\|}\biggr)\ll X^{3+\epsilon}\biggl(\frac{1}{r}+\frac{1}{X}+\frac{r}{X^3}\biggr).
\end{equation}
Meanwhile, it follows by Dirichlet's approximation theorem that for a given $H$ with $H\leq X^{3}$, there exist $r\in \mathbb{N}$ and $b\in \mathbb{Z}$ such that $|\alpha_3-b/r|\leq r^{-1}HX^{-3}$ and $r\leq H^{-1}X^3.$ The fact that $\alpha_3\in \mathfrak{m}(H)$ ensures that $r>H.$ Therefore, we find by $(\ref{asdf})$ that 
\begin{equation}\label{almost last}
      \sum_{1\leq z_1\leq X}\text{min}\biggl(X^2,\frac{1}{\|z_1\alpha_3+3z_2\alpha_4+\beta_3\|}\biggr)\ll X^{3+\epsilon}(H^{-1}+X^{-1}).
\end{equation}

By substituting $(\ref{almost last})$ into $(\ref{bound for T})$, we see that  one has
\begin{equation}\label{bound for T2}
    \sup_{\substack{\boldsymbol{\alpha}\in \mathcal{M}_2(H)\\ \boldsymbol{\beta}\in [0,1)^3 }}|T(\boldsymbol{\alpha},\boldsymbol{\beta};X,sX^2)|\ll X^{8+\epsilon}(H^{-1}+X^{-1}).
\end{equation}
When $\mathcal{M}_1(H)$, $\mathcal{M}_3(H)$, $\mathcal{M}_4(H)$, one may choose $(z_2,h_3)$,  $(z_2,h_1)$, ($z_1,h_1$)  in the second expression in $(\ref{bound for T})$, instead of $(z_1,h_3)$ over which the exponential sum runs. Then, one infers by the argument leading from $(\ref{bound for T})$ to $(\ref{bound for T2})$ that for $l=1,2,4$, one has
\begin{equation}\label{bound for T3}
    \sup_{\substack{\boldsymbol{\alpha}\in \mathcal{M}_l(H)\\ \boldsymbol{\beta}\in [0,1)^3 }}|T(\boldsymbol{\alpha},\boldsymbol{\beta};X,sX^2)|\ll X^{8+\epsilon}(H^{-1}+X^{-1}).
\end{equation}

Substituting $(\ref{K(gamma_1,gamma_2)})$, $(\ref{bound for T2})$ and $(\ref{bound for T3})$ into the last expression in $(\ref{final})$, we complete the proof of Lemma $\ref{lemma2.2}.$
\end{proof}

\bigskip

\section{Proof of Lemma 2.2}\label{sec3}

Recall the definition $(\ref{def1.1})$ of $F(\boldsymbol{\alpha})$.
\begin{lemma}\label{prop3.1}
    Suppose that $\mathfrak{D}$ is a measurable set in $[0,1)^2$. Then, we have
    \begin{equation}
        \int_0^1\int_{\mathfrak{D}}\int_0^1 |F(\boldsymbol{\alpha})|^8d\boldsymbol{\alpha}\ll X^{10+\epsilon}\cdot \text{mes}(\mathfrak{D}),
    \end{equation}
    where $d\boldsymbol{\alpha}=d\alpha_1 d\alpha_2 d\alpha_3 d\alpha_4.$

\end{lemma}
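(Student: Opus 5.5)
The plan is to prove a pointwise bound in the middle two variables: for every fixed $(\alpha_2,\alpha_3)\in[0,1)^2$,
\begin{equation*}
\int_0^1\int_0^1 |F(\boldsymbol{\alpha})|^8\,d\alpha_1\,d\alpha_4\ll X^{10+\epsilon}.
\end{equation*}
Since $|F(\boldsymbol{\alpha})|^8\geq 0$, integrating this over $(\alpha_2,\alpha_3)\in\mathfrak{D}$ and using Fubini then gives the lemma immediately, with the factor $\text{mes}(\mathfrak{D})$.

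To prove the pointwise bound, expand $|F(\boldsymbol{\alpha})|^8=F(\boldsymbol{\alpha})^4\overline{F(\boldsymbol{\alpha})}^4$ as a sum over $1\leq x_1,\ldots,x_8,y_1,\ldots,y_8\leq X$. By orthogonality in $\alpha_1$ and $\alpha_4$, the double integral equals
\begin{equation*}
\sum_{\substack{x_1^3+\cdots+x_4^3=x_5^3+\cdots+x_8^3\\ y_1^3+\cdots+y_4^3=y_5^3+\cdots+y_8^3}} e\Bigl(\alpha_2\sum_{i=1}^8\pm x_i^2y_i+\alpha_3\sum_{i=1}^8\pm x_iy_i^2\Bigr),
\end{equation*}
with signs $+$ for $i\leq 4$ and $-$ for $i>4$. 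The left-hand side is real and non-negative, so we may apply the triangle inequality and discard the phases. This bounds it by the number of pairs $(\boldsymbol{x},\boldsymbol{y})$ satisfying the two cubic equations.

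The two equations involve disjoint sets of variables, so this count factors as $T(X)^2$. Here $T(X)$ is the number of solutions of $x_1^3+\cdots+x_4^3=x_5^3+\cdots+x_8^3$ with $1\leq x_i\leq X$, which equals $\int_0^1|\sum_{x\leq X}e(\alpha x^3)|^8\,d\alpha$. Hua's lemma gives $T(X)\ll X^{5+\epsilon}$, hence $T(X)^2\ll X^{10+2\epsilon}$. Renaming $\epsilon$ yields the pointwise bound.

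There is no real obstacle. The only points to state carefully are the non-negativity, which justifies both the triangle inequality step and Fubini, and that the $\alpha_2$- and $\alpha_3$-dependence lives entirely in unimodular phases. Hua's lemma for cubes is the single external input.
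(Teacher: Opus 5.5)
Your proof is correct and follows the paper's argument: orthogonality in $\alpha_1$ and $\alpha_4$, the triangle inequality to discard the $(\alpha_2,\alpha_3)$-dependent phases, Hua's lemma to bound the number of solutions of the two decoupled cubic equations by $X^{10+\epsilon}$, and then integration over $\mathfrak{D}$. Your explicit factorization of the count as $T(X)^2$ with $T(X)\ll X^{5+\epsilon}$ simply spells out a step the paper leaves implicit.
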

\begin{proof}
    By orthogonality, we see that 
    \begin{equation}\label{first eq in prop3.1}
    \begin{aligned}
        \int_0^1\int_{\mathfrak{D}}\int_0^1 |F(\boldsymbol{\alpha})|^8d\boldsymbol{\alpha}&=\int_{\mathfrak{D}}\int_0^1\int_0^1 |F(\boldsymbol{\alpha})|^8d\alpha_1d\alpha_4d\alpha_2d\alpha_3\\
        &=\int_{\mathfrak{D}}\sum_{1\leq \boldsymbol{x},\boldsymbol{y}\leq X}\Delta(\boldsymbol{x},\boldsymbol{y},\alpha_3,\alpha_2)d\alpha_2d\alpha_3,
    \end{aligned}
    \end{equation}
where 
\begin{equation*}
\Delta(\boldsymbol{x},\boldsymbol{y},\alpha_3,\alpha_2)=  e\biggl(\alpha_2 \sigma_{4,3,2}(\boldsymbol{x},\boldsymbol{y})+\alpha_2 \sigma_{4,3,3}(\boldsymbol{x},\boldsymbol{y})\biggr),
\end{equation*}
when
\begin{equation}\label{hua's lemma}
    \begin{aligned}
        \sigma_{4,3,1}(\boldsymbol{x},\boldsymbol{y})&=\sum_{i=1}^4x_i^3-\sum_{i=5}^8x_{i}^3=0\\
         \sigma_{4,3,4}(\boldsymbol{x},\boldsymbol{y})&=\sum_{i=1}^4y_i^3-\sum_{i=5}^8y_i^3=0,
    \end{aligned}
\end{equation}
and otherwise $\Delta(\boldsymbol{x},\boldsymbol{y},\alpha_3,\alpha_2)=0.$ 

Meanwhile, we infer by Hua's lemma \cite[Lemma 2.5]{MR1435742} that the number of solutions $1\leq \boldsymbol{x},\boldsymbol{y}\leq X$ satisfying $(\ref{hua's lemma})$ is $O(X^{10+\epsilon}).$ Therefore, by applying the triangle inequality to the right hand side of $(\ref{first eq in prop3.1})$, one deduces that
\begin{equation*}
\begin{aligned}
  \int_0^1\int_{\mathfrak{D}}\int_0^1 |F(\boldsymbol{\alpha})|^8d\boldsymbol{\alpha}&\ll \int_{\mathfrak{D}}X^{10+\epsilon}d\alpha_2d\alpha_3=X^{10+\epsilon}\cdot \text{mes}(\mathfrak{D}).
\end{aligned}
\end{equation*}
Hence, we complete the proof of Proposition $\ref{prop3.1}.$
\end{proof}

\bigskip

\begin{lemma}\label{prop3.2}
    One has
    \begin{equation}\label{claim in lemma 3.2}
\int_{\mathfrak{n}_{\delta}}|F(\boldsymbol{\alpha})|^{19}d\boldsymbol{\alpha}\ll X^{26-\delta''},
    \end{equation}
    for some $\delta''>0,$ where $d\boldsymbol{\alpha}=d\alpha_1 d\alpha_2 d\alpha_3 d\alpha_4.$
\end{lemma}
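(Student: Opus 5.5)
The plan is to interpolate between two mean values. The first is the $20$-th moment on a set where one coordinate is on minor arcs; it comes from Lemma \ref{lemma2.2} with $s=10$. The second is the $8$-th moment restricted to a set where the middle coordinates $(\alpha_2,\alpha_3)$ lie on major arcs; it comes from Lemma \ref{prop3.1}. A pruning over dyadic heights glues the two together.

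\textbf{Step 1: the $20$-th moment.} For the cubic Parsell--Vinogradov system in two variables the sum of the degrees is $K=4\cdot 3+3\cdot 2+2\cdot 1=20$. The sharp estimate \cite{MR3709122} then gives $J_s(X)\ll X^{2s+\epsilon}+X^{4s-20+\epsilon}$, and in particular
\[
J_{10}(2X)\ll X^{20+\epsilon}.
\]
Inserting this into Lemma \ref{lemma2.2} with $s=10$ gives, for each $1\leq l\leq 4$,
\[
\int_{\mathcal{M}_l(H)}|F(\boldsymbol{\alpha})|^{20}\,d\boldsymbol{\alpha}\ll X^{28+\epsilon}(H^{-1}+X^{-1}).
\]
This is a saving of $H^{-1}$ over the critical size $X^{2\cdot 20-12}=X^{28}$.

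\textbf{Step 2: the restricted $8$-th moment.} Since $\mathrm{mes}(\mathfrak{M}(H))\ll H^2X^{-3}$, Lemma \ref{prop3.1} applied with $\mathfrak{D}=\mathfrak{M}(H_1)\times\mathfrak{M}(H_1)$ gives
\[
\int_{[0,1)\times\mathfrak{D}\times[0,1)}|F|^{8}\ll X^{4+\epsilon}H_1^4.
\]
Write $19=\tfrac{11}{12}\cdot 20+\tfrac{1}{12}\cdot 8$. On a set $S$ where $(\alpha_2,\alpha_3)\in\mathfrak{D}$ and also $\alpha_1\in\mathfrak{m}(H_2)$ (or $\alpha_4\in\mathfrak{m}(H_2)$), H\"older's inequality gives
\[
\int_S|F|^{19}\ll \bigl(X^{28}H_2^{-1}\bigr)^{11/12}\bigl(X^{4}H_1^{4}\bigr)^{1/12}X^{\epsilon}=X^{26+\epsilon}H_2^{-11/12}H_1^{1/3}.
\]
Where $(\alpha_2,\alpha_3)\notin\mathfrak{M}(H_1)^2$, some $\alpha_2$ or $\alpha_3$ lies in $\mathfrak{m}(H_1)$. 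There I use $\mathcal{M}_2(H_1)$ or $\mathcal{M}_3(H_1)$ together with the unrestricted $8$-th moment $X^{10+\epsilon}$ from Lemma \ref{prop3.1}. Since $\mathfrak{m}(H)$ shrinks as $H$ grows, any $\alpha$ in the dyadic shell (in $\mathfrak{m}(H_1/2)$ but not in $\mathfrak{m}(H_1)$) lies in $\mathcal{M}_l(H_1/2)$, so the gain $(H_1/2)^{-11/12}\asymp H_1^{-11/12}$ is valid on it. The bound there is therefore
\[
X^{26+1/2+\epsilon}H_1^{-11/12}.
\]
This is acceptable once $H_1\geq X^{6/11+\eta}$, and Lemma \ref{lemma2.2} allows $H\leq X^{3/2}$.

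\textbf{Step 3: pruning.} I will decompose dyadically in the height $H_1$ at which $(\alpha_2,\alpha_3)$ first lies in $\mathfrak{M}(H_1)^2$, and similarly for the height $H_2$ of $\alpha_1,\alpha_4$. In the $8$-th moment I will use the shell of $\mathfrak{M}(H_1)^2$, whose measure is still $\ll H_1^4X^{-6}$. Two regimes have to be reconciled:
\begin{itemize}
\item when $H_1$ is small, the term $H_1^{1/3}$ must be beaten by $H_2^{-11/12}$;
\item when $H_1$ is large, the trivial-$8$-th-moment bound $X^{26.5}H_1^{-11/12}$ must win.
\end{itemize}
Where both are inconclusive (moderate $H_1$ and $H_2\lesssim H_1^{4/11}$), I would try to improve the $8$-th moment over $\mathfrak{D}$ by Hua-type major arc bounds on $F$ itself. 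Alternatively I would refine Lemma \ref{lemma2.2} so that the minor-arc saving in one coordinate is retained at height $H_1$ while the others are restricted.

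The residual set is where all four $\alpha_i$ lie in $\mathfrak{M}(X^{\delta/10})$. There each $q_i\leq X^{\delta/10}$, so the common denominator $q\leq X^{4\delta/10}\leq X^{\delta}$. Also $|\alpha_i-a_i/q|\leq X^{\delta/10}X^{-3}\leq X^{\delta-3}$, so $\alpha\in\mathfrak{N}_\delta$. Hence on $\mathfrak{n}_\delta$ at least one coordinate is in $\mathfrak{m}(X^{\delta/10})$, which guarantees a power saving $X^{-\delta''}$ after summing the $O((\log X)^2)$ dyadic pieces.

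\textbf{Main obstacle.} I expect the exponent bookkeeping in Step 3 to be the hardest part, specifically the intermediate range $X^{\delta}\ll H_1\ll X^{6/11}$ with $H_2$ small. That is exactly where the crude Hölder interpolation loses the factor $H_1^{1/3}$, and I would first try the major arc bound for $F$ there.
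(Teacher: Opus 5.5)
You have the right ingredients: Lemma~\ref{lemma2.2} with $s=10$ and $J_{10}(2X)\ll X^{20+\epsilon}$, Lemma~\ref{prop3.1} with $\mathfrak{D}=\mathfrak{M}(H)^2$, H\"older with weights $11/12$ and $1/12$, dyadic pruning, and the check that the residual set lies in $\mathfrak{N}_\delta$. But the argument is not finished. You leave open the range of moderate $H_1$ with $H_2\lesssim H_1^{4/11}$, and offer only speculative remedies (major arc bounds for $F$, a refined Lemma~\ref{lemma2.2}).

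This difficulty comes from your two-height bookkeeping, not from the mathematics. You pair the \emph{restricted} eighth moment $X^{4+\epsilon}H_1^4$ only with a minor-arc saving in $\alpha_1$ or $\alpha_4$. You pair a minor-arc saving in $\alpha_2$ or $\alpha_3$ only with the \emph{unrestricted} eighth moment $X^{10+\epsilon}$. On the shell $(\alpha_2,\alpha_3)\in\mathfrak{M}(H_1)^2\setminus\mathfrak{M}(H_1/2)^2$, however, both facts hold simultaneously:
\begin{itemize}
\item one of $\alpha_2,\alpha_3$ lies in $\mathfrak{m}(H_1/2)$, so Lemma~\ref{lemma2.2} with $l=2$ or $3$ bounds the twentieth moment by $X^{28+\epsilon}(H_1^{-1}+X^{-1})$;
\item $(\alpha_2,\alpha_3)$ still lies in $\mathfrak{M}(H_1)^2$, so the eighth moment is still $\ll X^{4+\epsilon}H_1^4$.
\end{itemize}
H\"older then gives $X^{26+\epsilon}H_1^{-7/12}+X^{301/12+\epsilon}H_1^{1/3}$, a power saving throughout $X^{\delta/100}\le H_1\le X^{3/2}$. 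This covers the case $H_1\ge H_2$. When $H_2\ge H_1$, your own bound gives $X^{26+\epsilon}H_2^{-11/12}H_1^{1/3}\le X^{26+\epsilon}H_2^{-7/12}$. So every case is handled, and neither the threshold $X^{6/11}$ nor the trivial eighth moment is needed.

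The paper organises the proof this way, but with a single height. It takes $H$ to be the largest of the four coordinate heights and writes $\mathfrak{M}(H)^4\setminus\mathfrak{M}(H/2)^4=\bigcup_{l=1}^4\mathfrak{P}_l(H)$. On each $\mathfrak{P}_l(H)$, every coordinate lies in $\mathfrak{M}(H)$, so Lemma~\ref{prop3.1} applies with $\mathfrak{D}=\mathfrak{M}(H)^2$. The coordinate of maximal height lies in $\mathfrak{m}(H/2)$, so Lemma~\ref{lemma2.2} applies at height $H/2$. The two bounds are interpolated at the same $H$ and summed over $O(\log X)$ dyadic values $X^{\delta/100}\le H\le X^{3/2}$.

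One smaller point: in Step~2 you dropped the $X^{-1}$ term of Lemma~\ref{lemma2.2}. It contributes $X^{301/12+\epsilon}H^{1/3}$, which is acceptable only because $H\le X^{3/2}$ makes it $\ll X^{307/12+\epsilon}$.
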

\begin{proof}
 Recall the definition of $\mathfrak{M}(H)$. We observe that for any $H\leq X^{\delta/100},$ one has
\begin{equation}\label{inclusion}
    \mathfrak{M}(H)^4\subseteq \mathfrak{N}_{\delta}.
\end{equation}

Observe that for $H>0,$ one has
\begin{equation}\label{pruning argument crucial}
\begin{aligned}
   \mathfrak{M}(H)^4 \setminus \mathfrak{M}(H/2)^4
   =\mathfrak{P}_1(H)\cup \mathfrak{P}_2(H)\cup\mathfrak{P}_3(H)\cup \mathfrak{P}_4(H),
\end{aligned}
\end{equation}
where 
\begin{equation*}
    \begin{aligned}
        \mathfrak{P}_1(H)&=\mathfrak{M}(H)^4\setminus(\mathfrak{M}(H)^3\times\mathfrak{M}(H/2))\\
 \mathfrak{P}_2(H)&=  (\mathfrak{M}(H)^3\times\mathfrak{M}(H/2))\setminus (\mathfrak{M}(H)^2\times\mathfrak{M}(H/2)^2)\\
  \mathfrak{P}_3(H)&= (\mathfrak{M}(H)^2\times\mathfrak{M}(H/2)^2)\setminus (\mathfrak{M}(H)\times\mathfrak{M}(H/2)^3)\\
  \mathfrak{P}_4(H)&=  (\mathfrak{M}(H)\times\mathfrak{M}(H/2)^3)\setminus \mathfrak{M}(H/2)^4.\end{aligned}
\end{equation*}

To verify the inequality $(\ref{claim in lemma 3.2})$, it suffices to show that for $1\leq l\leq 4$ and for all $H\geq X^{\delta/100}$ with $\delta=10^{-10}$, one has
\begin{equation}\label{claim}
\int_{ \mathfrak{P}_l(H)}|F(\boldsymbol{\alpha})|^{19}d\boldsymbol{\alpha}\ll X^{26-\delta'}\ \text{for some}\ \delta'>0.
\end{equation}
In fact, note that 
\begin{equation}\label{fdsa}
\begin{aligned}
    \mathfrak{n}_{\delta}&\subseteq \mathfrak{M}(X^{3/2})^4\setminus \mathfrak{M}(X^{\delta/100})^4\\
    &\subseteq\bigcup_{j=0}^{L}(\mathfrak{M}(2^{-j}X^{3/2})^4\setminus \mathfrak{M}(2^{-j-1}X^{3/2})^4), 
\end{aligned}
\end{equation}
for some $L=O(\log X).$
Then, it follows by $(\ref{pruning argument crucial})$ that 
\begin{equation}\label{asdf1}
\begin{aligned}
    \mathfrak{n}_{\delta}&\subseteq \bigcup_{j=0}^{L}\bigl(\mathfrak{P}_1(2^{-j}X^{3/2})\cup \mathfrak{P}_2(2^{-j}X^{3/2})\cup\mathfrak{P}_3(2^{-j}X^{3/2})\cup \mathfrak{P}_4(2^{-j}X^{3/2})\bigl).
\end{aligned}
\end{equation}
Then, we infer by $(\ref{claim})$ and $(\ref{asdf1})$  that 
\begin{equation*}
\begin{aligned}
\int_{\mathfrak{n}_{\delta}}|F(\boldsymbol{\alpha})|^{19}d\boldsymbol{\alpha}&\ll \sum_{j=0}^L\sum_{l=1}^4\int_{ \mathfrak{P}_l(2^{-j}X^{3/2})}|F(\boldsymbol{\alpha})|^{19}d\boldsymbol{\alpha} \ll X^{26-\delta''}.
\end{aligned}
\end{equation*}

Hence, we turn to verify $(\ref{claim}).$
First, we observe that for $1\leq l\leq 4$ and $H>0,$ one has
\begin{equation}\label{observation}
    \int_{ \mathfrak{P}_l(H)}|F(\boldsymbol{\alpha})|^{20}d\boldsymbol{\alpha}\ll \int_{ \mathcal{M}_l(H/2)}|F(\boldsymbol{\alpha})|^{20}d\boldsymbol{\alpha}
\end{equation}
Then, we find by $(\ref{observation})$ and Lemma $\ref{lemma2.2}$ with $s=10$ that for $1\leq l\leq 4$ one has
\begin{equation}\label{bound0}
\begin{aligned}
\int_{ \mathfrak{P}_l(H)}|F(\boldsymbol{\alpha})|^{20}d\boldsymbol{\alpha}&\ll \int_{ \mathcal{M}_l(H/2)}|F(\boldsymbol{\alpha})|^{20}d\boldsymbol{\alpha}\\
&\ll X^{8} (\log X)^{40}\cdot J_{10}(2X)\cdot (H^{-1}+X^{-1})\\
&\ll X^{28+\epsilon}(H^{-1}+X^{-1}),
\end{aligned}
\end{equation}
where we have used the sharp estimate for the cubic Parsell-Vinogradov's system \cite[Theorem 1.5]{MR3709122} with $s=10$ that 
\begin{equation*}
    J_{10}(2X)\ll X^{20+\epsilon}.
\end{equation*}

Next, by recalling the definition of $\mathfrak{P}_l(H)$,
we see by applying Lemma $\ref{prop3.1}$ with $\mathfrak{D}= \mathfrak{M}(H)^2$  that for $1\leq l\leq 4$ one has
\begin{equation}\label{bound3}
\begin{aligned}
    \int_{\mathfrak{P}_l(H)}|F(\boldsymbol{\alpha})|^{8}d\boldsymbol{\alpha}&\ll \int_0^1\int_{\mathfrak{M}(H)^2}\int_0^1|F(\boldsymbol{\alpha})|^{8}d\boldsymbol{\alpha}\\
    &\ll X^{10+\epsilon}\cdot \text{mes}(\mathfrak{M}(H)^2)\\
    &\ll X^{4+\epsilon}H^4,
\end{aligned}
\end{equation}
where we have used the fact that $\text{mes}(\mathfrak{M}(H))\leq H^2X^{-3}.$

Finally, by applying the H\"older's inequality, we deduce from $(\ref{bound0})$ and $(\ref{bound3})$ that for $1\leq l\leq 4$ one has
\begin{equation*}
\begin{aligned}
   & \int_{ \mathfrak{P}_l(H)}|F(\boldsymbol{\alpha})|^{19}d\boldsymbol{\alpha}\\
    &\leq\left(\int_{ \mathfrak{P}_l(H)}|F(\boldsymbol{\alpha})|^{20}d\boldsymbol{\alpha}\right)^{11/12}\cdot \left(\int_{ \mathfrak{P}_l(H)}|F(\boldsymbol{\alpha})|^{8}d\boldsymbol{\alpha}\right)^{1/12}\\
    &\ll (X^{28+\epsilon}(H^{-1}+X^{-1}))^{11/12}(X^{4+\epsilon}H^4)^{1/12}\\
    &\ll X^{26+\epsilon}H^{-7/12}+X^{24.75+1/3+\epsilon}H^{1/3}\\
    &\ll X^{26-\delta'}\ \ \text{for some }\delta'>0,
\end{aligned} 
\end{equation*}
where we have used the fact that $X^{\delta/100}\leq H\leq X^{3/2}.$
\end{proof}

\begin{proof}[Proof of Lemma 2.2]
    By applying the H\"older's inequality, we first deduce that
    \begin{equation}\label{holder's inequality}
        \int_{\mathfrak{n}_{\delta}} \prod_{j=1}^s F_{c_j}(\boldsymbol{\alpha})d\boldsymbol{\alpha}\ll \prod_{j=1}^s\biggl(\int_{\mathfrak{n}_{\delta}}|F_{c_j}(\boldsymbol{\alpha})|^sd\boldsymbol{\alpha}\biggr)^{1/s}.
    \end{equation}
Note that whenever $s\geq19$ one has
\begin{equation}\label{smoments}
\begin{aligned}
    \int_{\mathfrak{n}_{\delta}}|F_{c_j}(\boldsymbol{\alpha})|^sd\boldsymbol{\alpha}&\ll  X^{2s-38}\int_{\mathfrak{n}_{\delta}}|F_{c_j}(\boldsymbol{\alpha})|^{19}d\boldsymbol{\alpha}.
\end{aligned}
\end{equation}
Meanwhile, one infers that $\boldsymbol{\alpha}\in \mathfrak{n}_{\delta}$ implies $c_j\boldsymbol{\alpha}\in \mathfrak{n}_{\delta/c_j}\ (\text{mod}\ 1),$ as $\delta$ is sufficiently small. Hence, we see by change of variables that
\begin{equation}\label{almost last inequality}
\begin{aligned}
    \int_{\mathfrak{n}_{\delta}}|F_{c_j}(\boldsymbol{\alpha})|^{19}d\boldsymbol{\alpha}&=\frac{1}{c_j^4}\int_{c_j\mathfrak{n}_{\delta}}|F_1(\boldsymbol{\alpha})|^{19}d\boldsymbol{\alpha}\ll \int_{\mathfrak{n}_{\delta/c_j}}|F_1(\boldsymbol{\alpha})|^{19}d\boldsymbol{\alpha},
\end{aligned}
\end{equation}
where $c_j\mathfrak{n}_{\delta}=\{c_j\boldsymbol{\alpha}\in [0,c_j]^4:\ \boldsymbol{\alpha}\in \mathfrak{n}_{\delta}\}$. On recalling the definition $(\ref{def1.1})$ of $F(\boldsymbol{\alpha})$, we find that 
\begin{equation*}
    \begin{aligned}
F_1(\alpha_1,\alpha_2,\alpha_3,\alpha_4)&=F(\alpha_1,\alpha_2,\alpha_3,\alpha_4)+F(-\alpha_1,-\alpha_2,-\alpha_3,-\alpha_4)\\
&+F(\alpha_1,-\alpha_2,\alpha_3,-\alpha_4)+F(-\alpha_1,\alpha_2,-\alpha_3,\alpha_4)+O(X).
    \end{aligned}
\end{equation*}
By substituting this into the upper bound in $(\ref{almost last inequality}),$ one infers by the triangle inequality together with Lemma $\ref{prop3.2}$ that
\begin{equation}\label{19moments}
    \begin{aligned}
\int_{\mathfrak{n}_{\delta/c_j}}|F_1(\boldsymbol{\alpha})|^{19}d\boldsymbol{\alpha}&\ll\int_{\mathfrak{n}_{\delta/c_j}}|F(\boldsymbol{\alpha})|^{19}d\boldsymbol{\alpha}+X^{19}\\
&\ll X^{26-\eta'''},
    \end{aligned}
\end{equation}
for some $\eta'''>0,$ where we have used the observation that $(\alpha_1,\alpha_2,\alpha_3,\alpha_4)\in \mathfrak{n}_{\delta/c_j}$ implies that
\begin{equation*}
\begin{aligned}
(-\alpha_1,-\alpha_2,-\alpha_3,-\alpha_4)&\in \mathfrak{n}_{\delta/c_j}\ (\text{mod}\ 1)\\
(\alpha_1,-\alpha_2,\alpha_3,-\alpha_4)&\in \mathfrak{n}_{\delta/c_j}\ (\text{mod}\ 1)\\
(-\alpha_1,\alpha_2,-\alpha_3,\alpha_4)&\in \mathfrak{n}_{\delta/c_j}\ (\text{mod}\ 1).
\end{aligned}
\end{equation*}

Finally, by substituting $(\ref{19moments})$ into $(\ref{almost last inequality})$ and that into the last expression in $(\ref{smoments})$, we deduce that
\begin{equation}
    \int_{\mathfrak{n}_{\delta}}|F_{c_j}(\boldsymbol{\alpha})|^sd\boldsymbol{\alpha}\ll X^{2s-12-\eta''},
\end{equation}
for some $\eta''>0.$ Therefore, by substituting this into $(\ref{holder's inequality})$, this completes the proof of Lemma \ref{lemma1.3}.
\end{proof}

\section*{Acknowledgement}
The author would like to express sincere gratitude to Trevor Wooley for suggesting this problem. The author would like to thank Julia Brandes and Scott Parsell for many helpful comments and encouragement. Especially, the author would like to thank Scott Parsell for pointing out an issue in the earlier draft and providing suggestions to resolve this issue. The author also gratefully acknowledges support from the KAP allocation at the University of California, Davis.


\bibliographystyle{alpha}
\bibliography{reference}

\end{document}